\theoremstyle{plain} 
\newtheorem{thm}{Theorem}[section] 
\newtheorem{lem}[thm]{Lemma} 
\newtheorem{prop}[thm]{Proposition}
\theoremstyle{definition}
\theoremstyle{remark}
\newcommand{\one}{\mathbbm{1}}
\DeclareMathOperator*{\argmin}{argmin} 
\newcommand{\icx}{\preceq_{\mathrm{icx}}}
\newcommand{\icv}{\preceq_{\mathrm{icv}}}
\begin{document}

\title{Consistent estimation of distribution functions \\ under increasing concave and convex stochastic ordering}
\author{Alexander Henzi} 
\affil{University of Bern, Switzerland \\ \texttt{alexander.henzi@stat.unibe.ch}}
\date{\today} 
\maketitle

\begin{abstract}
A random variable $Y_1$ is said to be smaller than $Y_2$ in the increasing concave stochastic order if $\mathbb{E}[\phi(Y_1)] \leq \mathbb{E}[\phi(Y_2)]$ for all increasing concave functions $\phi$ for which the expected values exist, and smaller than $Y_2$ in the increasing convex order if $\mathbb{E}[\psi(Y_1)] \leq \mathbb{E}[\psi(Y_2)]$ for all increasing convex $\psi$. This article develops nonparametric estimators for the conditional cumulative distribution functions $F_x(y) = \mathbb{P}(Y \leq y \mid X = x)$ of a response variable $Y$ given a covariate $X$, solely under the assumption that the conditional distributions are increasing in $x$ in the increasing concave or increasing convex order. Uniform consistency and rates of convergence are established both for the $K$-sample case $X \in \{1, \dots, K\}$ and for continuously distributed $X$.
\end{abstract}

\section{Introduction}
The nonparametric estimation of distribution functions under stochastic order restrictions is a classical problem in statistics. It can be formulated very generally as the task to estimate the conditional distributions of a random variable $Y$ given a covariate $X$, solely under the assumption that these distributions are increasing in a certain stochastic order. The classical and best understood order is first order stochastic dominance, requiring that the conditional cumulative distribution functions (CDFs) $F_x(y) = \mathbb{P}(Y \leq y \mid X = x)$ are decreasing in $x$ for every fixed $y \in \mathbb{R}$. \citet{Brunk1966} were the first to consider this constrained estimation problem in the two sample case $X \in \{1,2\}$. Almost 40 years later, \citet{Elbarmi2005} have described an estimator for $X$ taking $K$ discrete ordered values, say, $X \in \{1, \dots, K\}$, and again after more than a decade, \citet{Moesching2020} extended it to continuously distributed $X$. In a further leap of complexity, \citet{Henzi2021c} have shown that consistent estimation under first order stochastic dominance is even possible with partially ordered covariates $X \in \mathbb{R}^d$. Stronger orders considered in the literature are the uniform stochastic ordering and the likelihood ratio order, see \citet{Elbarmi2016} and \citet{Moesching2020a} and the references therein. A weaker constraint is stochastic precedence \citep{Arcones2002}, and a structurally different stochastic order is the peakedness order, where the variability of the conditional distributions of $Y$ around a center is increasing in the covariate \citep{Rojo2007, Elbarmi2012, Elbarmi2017}. A common attractive feature of all these order restricted estimators is that they do not require the specification of tuning parameters, and automatically adapt to the smoothness of the underlying functions that are estimated, such as in \citet[Theorems 3.3 and 3.4]{Moesching2020}.

So far, the main efforts in developing estimators under stochastic order restrictions have been focused on first order stochastic dominance and stronger orders, and consistency results in the case of continuously distributed $X$ have only been derived for first order stochastic dominance. This is a limitation insofar as these orders require the conditional CDFs $F_x(y)$ to be decreasing in $x$ for all fixed $y$. In particular, the CDFs for different values of $x$ are not allowed to cross, which in practice often happens in the tails when the variability of $Y$ increases (or decreases) with $x$. The purpose of this article is to develop consistent estimators under the increasing concave and increasing convex stochastic order, which are weaker orders applicable in situations where first order stochastic dominance is not appropriate. Estimation under the increasing concave order has been studied before by \citet{Rojo2003} and \citet{Elbarmi2009} in the two-sample case $X \in \{1, 2\}$. In this article, an estimator generalizing the one by \citet{Elbarmi2009} to the $K$-sample case and continuously distributed $X$ is proposed, and uniform consistency and rates of convergence are established.

For two random variables $Y_1$ and $Y_2$ with finite expected values, $Y_1$ is said to be smaller in the increasing concave order than $Y_2$ if $\mathbb{E}[\phi(Y_1)] \leq \mathbb{E}[\phi(Y_2)]$ for all increasing concave functions $\phi$ for which the expectations exist. Similarly, $Y_1$ is smaller than $Y_2$ in the increasing convex order if $\mathbb{E}[\psi(Y_1)] \leq \mathbb{E}[\psi(Y_2)]$ for all increasing convex functions $\psi$, which is equivalent to $-Y_2$ being smaller than $-Y_1$ in the increasing concave order. In the following, these orders are abbreviated as  $Y_1  \icv Y_2$ and  $Y_1  \icx Y_2$, respectively, and $\icx$ and $\icv$ are both used as orders on random variables and on their CDFs. Another characterization \citep[see][Chapter 4]{Shaked2007} for the increasing concave order is
\[
\mathbb{E}[(y - Y_1)_+] = \int_{-\infty}^{y} F_1(t) \, dt \geq \int_{-\infty}^{y} F_2(t) \, dt = \mathbb{E}[(y - Y_2)_+], \ y \in \mathbb{R},
\]
where $(z)_+ = \max(z, 0)$, and $F_1$ and $F_2$ are the CDFs of $Y_1$ and $Y_2$, respectively. For the increasing convex order, the analogous condition reads as
\[
\mathbb{E}[(Y_1 - y)_+] = \int_y^{\infty} 1-F_1(t) \, dt \leq \int_y^{\infty} 1-F_2(t) \, dt = \mathbb{E}[(Y_2 - y)_+], \ y \in \mathbb{R}.
\]
A useful sufficient condition for the increasing concave order is that the CDFs $F_1$ and $F_2$ cross at a single point $y_0$ with $F_1(y) \leq F_2(y)$ for $y \leq y_0$ and $F_1(y) \geq F_2(y)$ for $y \geq y_0$, or with the reverse inequalities for the CDFs in case of the increasing convex order. The increasing concave order is well-known in economics as second order stochastic dominance, with ``second order'' referring to the fact that monotonicity is required for the integrated CDFs and not for the CDFs themselves. If $Y_1$ and $Y_2$ are portfolio returns, then $Y_1 \icv Y_2$ means that all individuals with increasing concave utility functions $\phi$, i.e.~all risk-averse utility maximizers, prefer $Y_2$ over $Y_1$. In the literature on finance and insurance, the increasing convex order appears under the name stop-loss order, a term introduced by \citet{Goovaerts1982} referring to the characterization $\mathbb{E}[(Y_1 - y)_+] \leq \mathbb{E}[(Y_2 - y)_+]$, which states that the expected stop-loss of $Y_2$ over any retention limit $y$ exceeds the stop-loss of $Y_1$. This suggests using $\icx$ as an order for comparing risks.

In the economics and finance literature, research has so far mainly been focused on developing tests for verifying if conditional distributions satisfy $\icv$- or $\icx$-order constraints, rather than using these orders as restrictions for estimating distributions. \citet{Baringhaus2009}, \citet{Berrendero2011}, and \citet{Donald2016} develop methods for the two-sample case; see the references in these articles for earlier works on the two-sample testing problem. The $K$-sample case has been addressed by \citet{Linton2005} and \citet{Zhang2015}. The test by \citet{Linton2005}, which covers first, second and higher order stochastic dominance, allows for dependency among the observations and also adjustment by covariates. \citet{Zhang2015} assume independent data and apply an isotonic regression estimator corresponding to the intermediate estimator $\tilde{M}_{x}(y)$ in Section \ref{sec:estimation} of this article. For a continuous covariate, \citet{Hsu2019} and \citet{Chetverikov2021} propose more general tests of monotonicity, which are also applicable to testing higher order stochastic dominance.

If one detaches the increasing concave (convex) order from its economic interpretation, then it can be viewed as an order relation where $Y$ increases with $X$, but its variability decreases (increases). More precisely, $Y_1 \icv Y_2$ if and only if there exists a variable $Z$ such that $Z \preceq_{\mathrm{cx}} Y_1$ and $Z \preceq_{\mathrm{st}} Y_2$, where $\preceq_{\mathrm{st}}$ and $\preceq_{\mathrm{cv}}$ denote first order stochastic dominance and the convex order, respectively. The latter is a variability order requiring that $\mathbb{E}[c(Z)] \leq \mathbb{E}[c(Y_2)]$ for all convex functions $c$ such that the expectations exist \citep[see][Chapter 3 and Theorem 4.A.6]{Shaked2007}. For the increasing convex order, one has $Y_1 \preceq_{\mathrm{cx}} Z$ instead of $Z \preceq_{\mathrm{cx}} Y_1$. With this broader perspective, one can find practical situations where the $\icv$- and $\icx$-order arise as natural constraints. For example, \citet{Vittorietti2021} perform a study in materials science, where intuition about physical properties suggests a model $\mathcal{N}(\mu_k, \sigma_k^2)$ with $\mu_1 \leq \dots \leq \mu_K$ and $\sigma_1^2 \geq \dots \geq \sigma_K^2$ for measurements of a certain outcome in $K$ different types of materials. This model, also studied by \citet{Shi1994}, satisfies the increasing concave order, and the methodology in this article provides an alternative nonparametric method for estimating the conditional distributions in their applications. Another application where the $\icv$- and $\icx$-order may be useful is the post-processing of point forecasts. Following \citet{Henzi2021c}, if $X$ is a point forecast for an outcome variable $Y$, say, an expert's inflation forecast, then one would expect that $Y$ attains higher values as $X$ increases. \citet{Henzi2021c} suggest to quantify the uncertainty of $X$ by estimating the conditional CDFs $F_x$ on training data under the assumption that they are increasing in first order stochastic dominance in $x$. This is plausible for many variables with right-skewed distribution, such as accumulated precipitation \citep{Henzi2021c} or patient length of stay \citep{Henzi2021a, Henzi2021b}. But the assumption may fail to hold in situations where the variability of $Y$ strongly changes with $X$. The case study in Section \ref{sec:application} presents an example with income expectations, where such an effect can be observed and the $\icv$-order provides a more plausible constraint than first order stochastic dominance.

\section{Estimation} \label{sec:estimation}
To avoid redundancy, only the estimation for the increasing concave order is presented here; the necessary adaptations for the increasing convex order are straightforward. Let $(X_1, Y_1), \dots, (X_n, Y_n) \in \mathbb{R} \times \mathbb{R}$ be covariate-observation pairs based on which the conditional distributions are to be estimated. In the literature on estimation under stochastic order restrictions, the CDFs $F_x(y)$ are often only estimated at the distinct values $x_1 < \dots < x_d$ of $X_1, \dots, X_n$ and $y_1 < \dots < y_m$ and of $Y_1, \dots, Y_n$, and frequently used estimation methods are nonparametric maximum likelihood estimation (NPLME) \citep[e.g.~in][]{Dykstra1991, Moesching2020a} and monotone least squares regression \citep[e.g.~in][]{Elbarmi2005, Moesching2020}. However, these two approaches turn out to be unrewarding in the case of the increasing concave order. Firstly, they lead to a constrained optimization problem with $\mathcal{O}(n^2)$ variables in general, namely the estimators $\hat{F}_{x_i}(y_j)$ for $F_{x_i}(y_j)$, which is not efficiently solvable for large $n$. And secondly, for the $\icv$-constrained estimator, proving consistency using the definition of the estimator as maximizer of the likelihood or least squares estimator seems intractable. The construction here is therefore an indirect approach. For $x, y \in \mathbb{R}$, define
\[
M_x(y) = \int_{-\infty}^y F_x(t) \, dt = \mathbb{E}[(y - Y)_+ \mid X = x].
\]
Under the assumption that $F_x \icv F_{x'}$ if $x \leq x'$, the quantities $M_x(y)$ should be decreasing in $x$ for all fixed $y$, and they satisfy
\[
M_x'(y+) = \lim_{h \rightarrow 0, \, h > 0} \frac{M_x(y + h) - M_x(y)}{h} = F_x(y).
\]
This suggests that an estimator $\hat{M}_x$ for $M_x$ may yield, under some conditions, an estimator for $F_x$. We restrict the estimation of $F_x(y)$ to $x \in \{x_1, \dots, x_d\}$; in Section \ref{sec:consistency}, it is shown that under a continuity assumption, any interpolation method to obtain estimates for $x \not\in\{x_1, \dots, x_d\}$ is sufficient for uniform consistency.

Since $M_{X_i}(y)$ equals the expected value $\mathbb{E}[(y - Y)_+ \mid X = X_i]$, a reasonable estimator for it is the antitonic least squares regression $\tilde{M}_{X_1}(y), \dots, \tilde{M}_{X_n}(y)$ of $(y - Y_1)_+, \dots, (y - Y_n)_+$ with covariates $X_1, \dots, X_n$, that is,
\[
[\tilde{M}_{X_1}(y), \dots, \tilde{M}_{X_n}(y)] \ = \argmin_{\eta \in \mathbb{R}^n: \, \eta_i \geq \eta_j \text{ if } X_i \leq X_j} \sum_{i = 1}^n [\eta_i - (y - Y_i)_+]^2.
\]
The order constraints enforce $\tilde{M}_{X_i}(y) = \tilde{M}_{X_j}(y)$ if $X_i = X_j$, so the above problem is equivalent to the reduced, weighted antitonic regression
\[
[\tilde{M}_{x_1}(y), \dots, \tilde{M}_{x_d}(y)] \ = \argmin_{\eta \in \mathbb{R}^d: \, \eta_1 \geq \dots \geq \eta_d}\sum_{i = 1}^d w_i [\eta_i - h_i(y)]^2,
\]
where $w_i = \#\{j \leq n: \, X_j = x_i\}$, $i = 1, \dots, d$, and
\[
h_i(y) = \frac{1}{w_i}\sum_{j: \, X_j = x_i} (y - Y_j)_+.
\]
This antitonic regression has the min-max-representation
\begin{equation} \label{eq:minmax}
	\tilde{M}_{x_i}(y) = \min_{k = 1, \dots, i} \max_{j = k, \dots, d}  \frac{1}{\sum_{s = k}^{j} w_s} \sum_{s = k}^j w_s h_s(y),
\end{equation}
see Equations (1.9)-(1.13) of \citet{Barlow1972}. In principle, one could now try to estimate $F_{x_i}(y)$ by taking the right-sided derivative of $\tilde{M}_{x_i}(\cdot)$ at $y$. However, $\tilde{M}_{x_i}$ is not necessarily convex and therefore its derivative may be decreasing and not a CDF. To correct this, let $\hat{M}_{x_i}$ be the greatest convex minorant to the function $y \mapsto \tilde{M}_{x_i}(y)$, which is the pointwise greatest convex function bounded by $\tilde{M}_{x_i}$ from above, and define $\hat{F}_{x_i}(y)$ as the right-hand slope of $\hat{M}_{x_i}(\cdot)$ at $y$. The following proposition, which is a consequence of the above min-max-formula and basic properties of greatest convex minorants, shows that this is a valid strategy. Its proof, and the proofs of all subsequent theoretical results, are deferred to the appendix.

\begin{prop} \label{prop:welldefined}
	\begin{itemize}
		\item[]
		\item[(i)] The functions $\tilde{M}_{x_i}(y)$ and $\hat{M}_{x_i}(y)$ are increasing and piecewise linear in $y$ for fixed $i \in \{1, \dots, d\}$, and decreasing in $i$ for fixed $y \in \mathbb{R}$.
		\item[(ii)] The functions $\hat{F}_{x_i}(y)$ for fixed $i \in \{1, \dots, d\}$ are piecewise constant CDFs with $F_{x_i}(y) = 0$ for $y < y_1$ and $F_{x_i}(y) = 1$ for $y \geq y_m$.
	\end{itemize}
\end{prop}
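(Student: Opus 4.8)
The plan is to read everything off from the min-max representation \eqref{eq:minmax} together with elementary properties of greatest convex minorants, working in the order $\tilde{M}_{x_i}\to\hat{M}_{x_i}\to\hat{F}_{x_i}$. First, observe that each $h_s(\cdot)$ is a nonnegative convex combination of the maps $y\mapsto(y-Y_j)_+$, hence continuous, nondecreasing, convex, piecewise linear with finitely many breakpoints in $\{y_1,\dots,y_m\}$, and with right-hand slope in $[0,1]$; the weighted partial averages $(\sum_{s=k}^j w_s)^{-1}\sum_{s=k}^j w_s h_s$ inherit all of these. In \eqref{eq:minmax} a maximum over $j$ followed by a minimum over $k$ of finitely many such functions preserves continuity, monotonicity in $y$, piecewise linearity, and the slope bound $[0,1]$ (the right-hand slope at a point is that of whichever partial average currently attains the min-max); only convexity may be lost, which is precisely what makes the minorant step necessary. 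Monotonicity of $\tilde{M}_{x_i}(y)$ in $i$ is immediate, since increasing $i$ only enlarges the index set of the outer minimum. I also record the two boundary facts that $\tilde{M}_{x_i}(y)=0$ for $y<y_1$ (every $(y-Y_j)_+$ vanishes) and that $h_s(y)=y-\bar{Y}_s$ for $y\ge y_m$, so $\tilde{M}_{x_i}$ is affine with slope $1$ on $[y_m,\infty)$.

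Next I pass to $\hat{M}_{x_i}$. Because $\tilde{M}_{x_i}\ge 0$, the constant $0$ is a convex minorant, so $0\le\hat{M}_{x_i}\le\tilde{M}_{x_i}$; in particular $\hat{M}_{x_i}$ is finite, continuous, and equal to the supremum of the affine minorants of $\tilde{M}_{x_i}$. Since $\tilde{M}_{x_i}$ vanishes on $(-\infty,y_1)$, every such affine minorant has nonnegative slope, so $\hat{M}_{x_i}$ is nondecreasing; it is piecewise linear because the greatest convex minorant of a piecewise linear function is piecewise linear with breakpoints among the original ones; and it is decreasing in $i$ because the greatest convex minorant is monotone in its argument, so $\tilde{M}_{x_i}\le\tilde{M}_{x_{i-1}}$ gives $\hat{M}_{x_i}\le\hat{M}_{x_{i-1}}$. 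Finally the squeeze $0\le\hat{M}_{x_i}(y)\le\tilde{M}_{x_i}(y)=0$ shows $\hat{M}_{x_i}\equiv 0$ on $(-\infty,y_1)$. This establishes (i).

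For (ii), $\hat{F}_{x_i}$ is by definition the right-hand slope of the convex, piecewise linear function $\hat{M}_{x_i}$, hence piecewise constant, nondecreasing (convexity), and right-continuous (an elementary property of right-hand slopes of convex piecewise linear functions). By the previous paragraph, $\hat{M}_{x_i}\equiv 0$ on $(-\infty,y_1)$ gives $\hat{F}_{x_i}(y)=0$ for $y<y_1$. For the right tail I use that $\tilde{M}_{x_i}$ has right-hand slope at most $1$ everywhere: hence $\tilde{M}_{x_i}(y_m)-\tilde{M}_{x_i}(t)\le y_m-t$ for all $t<y_m$, which says exactly that the affine function of slope $1$ through $(y_m,\tilde{M}_{x_i}(y_m))$ lies below $\tilde{M}_{x_i}$ on $(-\infty,y_m]$, while it coincides with $\tilde{M}_{x_i}$ on $[y_m,\infty)$; being an affine minorant it is dominated by $\hat{M}_{x_i}$, and together with $\hat{M}_{x_i}\le\tilde{M}_{x_i}$ this forces $\hat{M}_{x_i}=\tilde{M}_{x_i}$ on $[y_m,\infty)$, so $\hat{F}_{x_i}(y)=1$ for $y\ge y_m$. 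Being nondecreasing and squeezed between the plateau values $0$ and $1$, $\hat{F}_{x_i}$ takes values in $[0,1]$ and is a genuine CDF, which completes (ii).

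None of the individual steps is deep, but the point that demands care is the right endpoint in the last paragraph: a greatest convex minorant can in general shave off a positive amount of mass near the right edge of the support, so one cannot merely assert that $\hat{F}_{x_i}(y)=1$ for $y\ge y_m$. The argument goes through only by exploiting the specific structure, namely that the summands $(y-Y_j)_+$ force the slope of $\tilde{M}_{x_i}$ to be at most $1$, which certifies that the slope-$1$ supporting line at $y_m$ is admissible and hence that no mass is lost beyond $y_m$. Keeping track of the slope bound $[0,1]$ throughout the first step is therefore not a cosmetic convenience but the crucial ingredient.
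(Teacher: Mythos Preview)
Your proof is correct and follows essentially the same route as the paper's: both derive everything from the min-max formula \eqref{eq:minmax}, establish the slope bound $[0,1]$ for $\tilde{M}_{x_i}$, record the boundary behaviour at $y_1$ and $y_m$, and then pass these properties through the greatest convex minorant. Your treatment of the right endpoint---exhibiting the slope-$1$ line through $(y_m,\tilde{M}_{x_i}(y_m))$ as an explicit affine minorant---is spelled out in more detail than in the paper (which appeals to Lemma~\ref{lem:gcm} and the slope bound without elaboration), but the underlying idea is identical.
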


In practice, it is not possible to compute $\tilde{M}_{x_i}(y)$ and $\hat{M}_{x_i}(y)$ at all $y \in \mathbb{R}$. Although these functions are piecewise linear, there is no efficient procedure to identify the knots where their slope changes. A pragmatic solution is to evaluate $\tilde{M}_x$ and $\hat{M}_x$ on a fine grid $t_1 < \dots < t_k$ with $t_1 = y_1$ and $t_k = y_m$, and interpolate linearly in between. This has the consequence that the CDFs $\hat{F}_{x_i}$ can only put mass on $t_1, \dots, t_k$. By a standard result about isotonic regression (see Appendix \ref{app:gcm}), the right-sided slope of the greatest convex minorant to the interpolation of $(t_1, \tilde{M}_{x_i}(t_1)), \dots, (t_k, \tilde{M}_{x_i}(t_k))$ equals the isotonic regression of the slopes
\[
\tilde{F}_{x_i}(t_j) = \frac{\tilde{M}_{x_i}(t_{j + 1}) - \tilde{M}_{x_i}(t_j)}{t_{j + 1} - t_j}
\]
with weights $t_{j + 1} - t_j$, $j = 1, \dots, k - 1$. This isotonic regression directly yields the estimators for the conditional CDFs,
\[
[\hat{F}_{x_i}(t_1), \dots, \hat{F}_{x_i}(t_{k - 1})\big] \ = \argmin_{\xi \in \mathbb{R}^{k-1}: \, \xi_1 \leq \dots \leq \xi_{k-1}} \sum_{j = 1}^{k-1} (t_{j + 1} - t_{j})[\xi_j - \tilde{F}_{x_i}(y_j)]^2,
\]
and $\hat{F}_{x_i}(t_k) = 1$ by Proposition \ref{prop:welldefined} (ii) if $t_k = y_m$. To summarize, the estimation procedure consists of two series of monotone regressions, informally speaking one in the $X$-direction for fixed threshold $y$ to obtain $\icv$-ordered distributions, and another in the $Y$-direction for fixed covariate $x_i$ to ensure that the CDFs are increasing. It is not necessary to compute the functions $\hat{M}_{x_i}$ explicitly, since the computation of the greatest convex minorant is indirect via its right-hand slope. The exact solution of monotone regression problems can be obtained efficiently with the Pool-Adjacent Violators Algorithm (PAVA), which has complexity $\mathcal{O}(N)$ with sorted covariate and sample size $N$. Hence the overall complexity of the estimation procedure is $\mathcal{O}(n^2)$ if the number of distinct values in $X_1, \dots, X_n$ or $Y_1, \dots, Y_n$ grows at the rate $\mathcal{O}(n)$.

If the distinct values $y_1, \dots, y_m$ of $Y_1, \dots, Y_n$ are taken as the grid for computation, then the estimated distributions $\hat{F}_{x_i}$ can only put mass on the actual observations in the data, and they are equal to the conditional empirical cumulative distribution functions (ECDF) if these already satisfy the increasing concave order condition. That is, if $\check{F}_{x_j}$ is the ECDF of all $Y_i$ with $X_i = x_j$ and if $\check{F}_{x_i} \icv \dots \icv \check{F}_{x_d}$, then $\hat{F}_{x_j} = \check{F}_{x_j}$ for $j = 1, \dots, d$. If in addition $X_1, \dots, X_n$ are pairwise distinct, $\hat{F}_{X_i}$ is the Dirac measure at $Y_i$ for $i = 1, \dots, n$. The estimators under first order stochastic dominance \citep{Elbarmi2005, Moesching2020} also have this property. However, with the increasing concave order, even if the grid $\{t_1, \dots, t_k\}$ contains $\{y_1, \dots, y_m\}$, the estimator can put probability mass on points outside of $\{y_1, \dots, y_m\}$. In particular, if the response variable is known to take values in a discrete set, say $\mathbb{Z}$, then the grid should be chosen within this set to avoid positive estimated probabilities outside of the actual support.

The increasing concave order is preserved under pointwise convex combinations of CDFs, i.e.~if $F_1 \icv F_2$ and $G_1 \icv G_2$, then also $\lambda F_1 + (1-\lambda) G_1 \icv \lambda F_2 + (1-\lambda)G_2$ for $\lambda \in (0,1)$. This fact opens the possibility to combine the estimation procedure with sample splitting as suggested in \citet{Henzi2021c} for first order stochastic dominance. Instead of estimating the conditional distributions with the complete dataset, one may draw random subsamples from the data and aggregate the estimated conditional CDFs from each run by their pointwise average. This subsample aggregation (subagging) yields smoother estimated CDFs and prevents overfitting. Alternatively, the data can also be partitioned into several disjoint subsets instead of drawing subsamples, and \citet{Banerjee2019} have proved that this divide and conquer strategy may lead to better convergence rates of isotonic mean regression. Partitioning of the data is a valid strategy for very large datasets, but in smaller datasets it is more desirable to apply subagging to avoid that the estimator depends too strongly on the chosen partition. In principle, the averaging step in subagging or sample splitting could also be done on the level of the estimators $\tilde{M}_x$ instead of the CDFs $\hat{F}_x$, but if the goal is to obtain smoother CDFs, it is more natural to average the $\hat{F}_x$.

Finally, we show that the estimator proposed here generalizes the one by \citet{Elbarmi2009} for the case $X_i \in \{1, 2\}$. Their estimator depends on a parameter $\alpha \in [0,1]$, and equality holds for $\alpha = \#\{i \leq n: \, X_i = 1\} / n$. This follows from the fact that with $\check{M}_{j}(y) = \int_{-\infty}^y \check{F}_j(t) \, dt$, one can write $\tilde{M}_j(y)$ as
\[
\tilde{M}_{j}(y) = \one\{\check{M}_j(y) \geq \check{M}_2(y)\}\check{M}_1(y) + \one\{\check{M}_1(y) < \check{M}_2(y)\}[\alpha\check{M}_1(y) + (1-\alpha)\check{M}_1(y)],
\]
for $j = 1, 2$, where $\one$ is the indicator function. Taking the right-hand slope of the greatest convex minorant of the above functions then yields Equation (8) from \citet{Elbarmi2009}. The choice $\alpha = \#\{i \leq n: \, X_i = 1\} / n$ was already suggested in their article, and it corresponds to the natural weight for which $\tilde{M}_j$, $j = 1, 2$, are the antitonic regression estimators.

\section{Uniform consistency} \label{sec:consistency}
The following notation and assumptions are required for stating the theorems about uniform consistency. Let $(X_{ni},Y_{ni})$, $i = 1, \dots, n$, $n \in \mathbb{N}$, be a triangular array defined on a measurable space $(\Omega, \mathcal{F})$ with a probability measure $\mathbb{P}$. For a sequence of events $(A_n)_{n \in \mathbb{N}} \subset \mathcal{F}$, the statement ``$A_n$ holds with asymptotic probability one'' means $\lim_{n \rightarrow \infty} \mathbb{P}(A_n) = 1$. The covariates $X_{n1}, \dots, X_{nn}$ are assumed to be independent and have distinct values $x_1 < \dots < x_d$, and the response variables $Y_{n1}, \dots, Y_{nn}$ are independent conditional on $X_{n1}, \dots, X_{nn}$ such that $\mathbb{P}(Y_{ni} \leq y \mid X_{ni}) = F_{X_{ni}}$, with the CDFs $F_{x}$ increasing in $x$ in the increasing concave order. The distinct values of $Y_{n1}, \dots, Y_{nn}$ are again denoted by $y_1 < \dots < y_m$. A subscript $n$ in $\tilde{M}_{n; x}(y)$, $\hat{M}_{n; x}(y)$, and $\hat{F}_{n; x}(y)$ will be used to indicate that these quantities depend on the sample size $n$, but the dependency of $m$ and $d$ on $n$ is not written explicitly to lighten the notation. If $x \not\in \{x_1, \dots, x_d\}$, it is only assumed that $\tilde{M}_{n;x_i}(y) \geq \tilde{M}_{n;x}(y) \geq \tilde{M}_{n;x_{i + 1}}(y)$ for all $y \in \mathbb{R}$ if $x \in [x_i, x_{i + 1})$, and $\tilde{M}_{n;x}(y) = \tilde{M}_{n;x_1}(y)$ if $x < x_1$ or $\tilde{M}_{n;x}(y) = \tilde{M}_{n;x_d}(y)$ if $x \geq x_d$. The same property then also holds for $\hat{M}_{n;x}$. 

The key condition for proving consistency of $\hat{F}_{n;x}(y)$ is the following.

\medskip
\noindent\textbf{(A)} \ There exists $(c_n)_{n \in \mathbb{N}} \subset [0, \infty)$ and a sequence of sets $(I_n)_{n \in \mathbb{N}}$, $I_n \subset \mathbb{R}$, such that
\[
\lim_{n \rightarrow \infty} \mathbb{P}\left(\sup_{y \in \mathbb{R}, \, x \in I_n} |\tilde{M}_{n;x}(y) - M_{x}(y)| \geq c_n \right) = 0.
\]

Sufficient conditions for (A) will be given below, and the convergence rate $c_n$ depends on whether $X$ is discrete or continuously distributed and on the tail properties of $F_x$. If $X_{n1}, \dots, X_{nn} \in \{1, \dots, K\}$, one can simply set $I_n = \{1, \dots, K\}$. For continuously distributed covariates on an interval $I$, $I_n$ will be of the form $I_n = \{x \in I: \, x \pm \delta_n \in I\}$ with $\delta_n \rightarrow 0$, that is, it is in general not possible to show consistency at the boundary of the covariate domain. This is also the case in isotonic mean regression and estimation under first order stochastic dominance \citep{Moesching2020}.

The following proposition establishes the connection between the uniform consistency of $\tilde{M}_{n;x}(y)$ and $\hat{F}_{n;x}(y)$.

\begin{prop} \label{prop:consistency}
	Assume that (A) holds and $I \subseteq \mathbb{R}$ is a set such that $I_n \subseteq I$, $n \in \mathbb{N}$.
	\begin{itemize}
		\item[(i)] If there exist $J \subseteq \mathbb{R}$ and constants $C\geq 0$, $\beta > 0$ such that $|F_x(y) - F_x(z)| \leq C|y - z|^{\beta}$ for all $y, z \in J$, $x \in I$, then with $J_n = \{y \in J: \, y \pm c_n^{1/(1 + \beta)} \in J\}$,
		\[
		\lim_{n \rightarrow \infty} \mathbb{P}\left(\sup_{y \in J_n, \, x \in I_n} |\hat{F}_{n;x}(y) - F_{x}(y)| \geq (2 + C)c_n^{\beta/(1+\beta)} \right) = 0.
		\]
		\item[(ii)] If the distribution functions $F_x$, $x \in I$, have support in $\mathbb{Z}$ and if $\tilde{M}_{n; x}$ is computed with grid $\{y_1, y_1 + 1, \dots, y_{m} - 1, y_m\}$, then 
		\[
		\lim_{n \rightarrow \infty} \mathbb{P}\left(\sup_{y \in \mathbb{R}, \, x \in I_n} |\hat{F}_{n;x}(y) - F_{x}(y)| \geq 2c_n \right) = 0.
		\]
	\end{itemize}
\end{prop}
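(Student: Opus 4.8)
The plan is to derive both parts from a single observation: on the event in (A), the greatest convex minorant inherits the uniform approximation rate. Work on the event $\{\sup_{y\in\mathbb{R},\,x\in I_n}|\tilde M_{n;x}(y)-M_x(y)|\le c_n\}$, whose probability tends to one. Since $M_x(y)=\int_{-\infty}^y F_x(t)\,dt$ is convex in $y$ and $M_x(\cdot)-c_n\le\tilde M_{n;x}(\cdot)$ everywhere, the convex function $M_x-c_n$ is a convex minorant of $\tilde M_{n;x}$ and hence lies below its greatest convex minorant $\hat M_{n;x}$; conversely $\hat M_{n;x}\le\tilde M_{n;x}\le M_x+c_n$. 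Therefore $|\hat M_{n;x}(y)-M_x(y)|\le c_n$ for all $y\in\mathbb{R}$ and $x\in I_n$, again with probability tending to one; for $x\notin\{x_1,\dots,x_d\}$ the same holds because $\tilde M_{n;x}$ is squeezed between $\tilde M_{n;x_i}$ and $\tilde M_{n;x_{i+1}}$ and the bound in (A) is uniform over $I_n$.

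For (i), I would then use that $\hat M_{n;x}$ and $M_x$ are both convex with right-hand slopes $\hat F_{n;x}$ and $F_x$. Fix $x\in I_n$, $y\in J_n$, and put $h=c_n^{1/(1+\beta)}$, so that $y\pm h\in J$. The elementary convexity inequality $g'(y+)\le(g(y+h)-g(y))/h$ applied to $g=\hat M_{n;x}$, the bound $|\hat M_{n;x}-M_x|\le c_n$, the fact that the chord slope of the convex function $M_x$ over $[y,y+h]$ is at most its right-hand slope at $y+h$, and finally the Hölder bound, give
\[
\hat F_{n;x}(y)\ \le\ \frac{M_x(y+h)-M_x(y)+2c_n}{h}\ \le\ F_x(y+h)+\frac{2c_n}{h}\ \le\ F_x(y)+Ch^{\beta}+\frac{2c_n}{h}.
\]
The symmetric inequality $g'(y+)\ge(g(y)-g(y-h))/h$ together with the chord over $[y-h,y]$ yields the matching lower bound $\hat F_{n;x}(y)\ge F_x(y)-Ch^{\beta}-2c_n/h$. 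With $h=c_n^{1/(1+\beta)}$ both error terms equal $c_n^{\beta/(1+\beta)}$ and $Cc_n^{\beta/(1+\beta)}$, so $|\hat F_{n;x}(y)-F_x(y)|\le(2+C)c_n^{\beta/(1+\beta)}$, uniformly in $y\in J_n$, $x\in I_n$; since this holds on the event of (A), the conclusion follows.

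For (ii), $F_x$ is constant on each $[j,j+1)$, $j\in\mathbb{Z}$, so $M_x$ is piecewise linear with knots in $\mathbb{Z}$ and $M_x(j+1)-M_x(j)=F_x(j)$; the sequence $(M_x(j+1)-M_x(j))_j$ is already nondecreasing, hence equals its own isotonic regression, which by the min-max formula with unit weights is $F_x(j)=\min_{b\ge j}\max_{a\le j}(M_x(b+1)-M_x(a))/(b-a+1)$. On the integer grid, $\hat F_{n;x}(j)$ is exactly this min-max expression with $M_x$ replaced by $\tilde M_{n;x}$; since $b-a+1\ge1$ and taking a $\min$ of $\max$'s is $1$-Lipschitz in the sup-norm of the entries, (A) yields $|\hat F_{n;x}(j)-F_x(j)|\le 2c_n$ for every $j\in\{y_1,\dots,y_m-1\}$, hence $|\hat F_{n;x}(y)-F_x(y)|\le 2c_n$ on $[y_1,y_m)$. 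It remains to treat $y<y_1$, where $\hat F_{n;x}(y)=0$: there $\tilde M_{n;x}(y_1)=0$, so $M_x(y_1)\le c_n$, and $M_x\ge0$ forces $F_x(y_1-1)=M_x(y_1)-M_x(y_1-1)\le c_n$, whence $F_x(y)\le c_n$; and $y\ge y_m$, where $\hat F_{n;x}(y)=1$: there $\tilde M_{n;x}$ is affine with unit slope, so $\tilde M_{n;x}(y_m+1)-\tilde M_{n;x}(y_m)=1$, giving $|F_x(y_m)-1|=|M_x(y_m+1)-M_x(y_m)-1|\le 2c_n$ and thus $1-F_x(y)\le 2c_n$.

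The argument is largely bookkeeping; the step needing the most care is the chain of convexity inequalities in (i): one must notice that the greatest convex minorant does not inflate the approximation error beyond $c_n$, that the chord slopes of $M_x$ are squeezed between one-sided derivatives of $M_x$, i.e.\ values of $F_x$, and that the Hölder bound may only be invoked at points of $J$---which is precisely why the conclusion is confined to the shrunken set $J_n$ and why the auxiliary point must be placed at distance $c_n^{1/(1+\beta)}$, balancing the $Ch^{\beta}$ and $2c_n/h$ terms. In (ii) the only subtlety is that the min-max comparison covers only $[y_1,y_m)$, so the two boundary regimes are handled separately using that $\tilde M_{n;x}$ vanishes to the left of $y_1$ and is affine with unit slope to the right of $y_m$.
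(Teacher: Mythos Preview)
Your proof is correct and, for part (i), essentially identical to the paper's: both establish $|\hat{M}_{n;x}-M_x|\le c_n$ via the greatest-convex-minorant sandwich (the paper calls this Marshall's inequality and states it as a separate lemma, you prove it inline), then sandwich $\hat{F}_{n;x}(y)$ between chord slopes of $\hat{M}_{n;x}$ at step size $h=c_n^{1/(1+\beta)}$ and transfer to $F_x$ via convexity of $M_x$ and the H\"older bound.

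For part (ii) the two arguments diverge. The paper reuses the bound $|\hat{M}_{n;x}-M_x|\le c_n$ and simply notes that on the integer grid $\hat{F}_{n;x}(j)=\hat{M}_{n;x}(j+1)-\hat{M}_{n;x}(j)$ and $F_x(j)=M_x(j+1)-M_x(j)$, so the difference is at most $2c_n$ directly; the boundary regimes $y<y_1$ and $y\ge y_m$ are covered automatically because the functions are extended to all of $\mathbb{R}$ and (A) is uniform in $y$. You instead bypass $\hat{M}_{n;x}$ entirely: you write $\hat{F}_{n;x}(j)$ as the min--max of telescoping averages of $\tilde{M}_{n;x}$, observe that $F_x(j)$ is the same min--max with $M_x$ (since the increments are already monotone), and use that min--max is $1$-Lipschitz in sup-norm together with $b-a+1\ge 1$. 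This is a valid alternative---slightly heavier machinery, but it has the virtue of not needing Marshall's inequality at all for (ii). The price is that the min--max comparison only lives on $\{y_1,\dots,y_m-1\}$, forcing you to treat the two boundary regimes by separate ad hoc estimates; these are correct as written, though the paper's route avoids the case split.
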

Proposition \ref{prop:consistency} shows that if $\tilde{M}_{n;x}(y)$ is uniformly consistent in $x$ and $y$ at a rate $c_n$, then the estimator $\hat{F}_{n;x}(y)$ is also uniformly consistent. When the response variable is integer-valued, $\hat{F}_{n;x}$ is consistent at the same rate. Otherwise, if the distribution functions $F_x$ are H\"older continuous with index $\beta$, the corresponding rate for $\hat{F}_{n;x}(y)$ is $c_n^{\beta/(1+\beta)}$, for example $c_n^{1/2}$ if the $F_x$ are Lipschitz continuous. Note that in the case $J = \mathbb{R}$,  the sets $J_n$ in Proposition \ref{prop:consistency} (i) are also equal to $\mathbb{R}$, and in (ii), the support of $F_x$, $x \in I$, could be any discrete lattice instead of $\mathbb{Z}$.

We proceed to state conditions under which (A) holds. For the $K$-sample case, the assumption on the covariate is the following.

\medskip
\noindent \textbf{(K)} \ The covariates take values in $I = \{1, \dots, K\}$, and $\min_{j = 1, \dots, K} \mathbb{P}(X_{ni} = j) = p$ for some $p > 0$.

\medskip
Instead of $\{1, \dots, K\}$ in (K), the set $I$ could be any discrete ordered set with cardinality $K$. In the continuous case, the assumptions are analogous to (A.1) and (A.2) in \citet{Moesching2020}.

\medskip
\noindent \textbf{(C1)} \ The covariates $X_{n1}, \dots, X_{nn}$ admit a Lebesgue density bounded away from zero by $p > 0$ on an interval $I$.

\medskip
\noindent \textbf{(C2)} \ There exists a constant $L > 0$ such that for all $u, v \in I$ and $y \in \mathbb{R}$,
\[
|M_{u}(y) - M_{v}(y)| \leq L|u - v|.
\]

Note that the set $I$ and the constants $p$ in (K) and (C1) and $L$ in (C2) do not depend on $n$. Condition (C1) could be replaced by the weaker assumption that the number of points in every subinterval of $I$ of a certain size grows sufficiently fast, like in (A.2) of \citet[see also their Remark 3.2]{Moesching2020}. In particular, it is not necessary to assume that the covariates $X_{n1}, \dots, X_{nn}$ are pairwise distinct or independent. However, this more general condition would require to introduce additional notation and constants. Similarly, in (K), it is sufficient that each value $j \in \{1, \dots, K\}$ is attained at least $n\delta$ times with asymptotic probability one for some $\delta > 0$. The Lipschitz assumption (C2) in the continuous case is standard in isotonic regression \citep[see e.g.][]{Yang2019, Dai2020}, and it could be replaced by H\"older continuity with index $\alpha \in (0,1)$ at the cost of a slower convergence rate.

Since the goal is to prove consistency for an estimator of the expected values $\mathbb{E}[(y - Y)_+ \mid X = x]$, it is natural that some additional assumptions on the tail behaviour of the distributions $F_x$ are required. In the two cases below, the set $I$ is assumed to be the one from (K) or from (C1), (C2).

\medskip
\noindent \textbf{(P)} \ There exist $\lambda > 2$ and $y_0 \geq 0$ such that for all $y \geq y_0$ and $x \in I$,
\[
\mathbb{P}(|Y| \geq y \mid X = x) \leq y^{-\lambda}.
\]

\medskip
\noindent \textbf{(E)} \ There exist $\lambda > 0$ and $y_0 \geq 0$ such that for all $y \geq y_0$ and $x \in I$,
\[
\mathbb{P}(|Y| \geq y \mid X = x) \leq \exp(-\lambda y).
\]

\begin{thm} \label{thm:rate}
	Condition (A) holds with
	\[
	c_n = \begin{dcases}
		4p^{-1/2}n^{-1/2 + 1/\lambda}\log(n)^{1/2 + 1/\lambda}, & \ \text{under (K) and (P)}, \\
		8p^{-1/2}\lambda^{-1}n^{-1/2}\log(n)^{3/2}, & \ \text{under (K) and (E)}, \\
		[4p^{-1/2} + L] n^{-1/3 + 2/(3\lambda)}\log(n)^{1/3 + 2/(3\lambda)}, & \ \text{under (C1), (C2), and (P)}, \\
		(2/\lambda)^{2/3}[4p^{-1/2} + L]n^{-1/3}\log(n), & \ \text{under (C1), (C2), and (E)},
	\end{dcases}
	\]
	and
	\[
	I_n = \begin{dcases}
		\{1, \dots, K\}, & \ \text{under (K)}, \\
		\{x \in I: \, x \pm n^{-1/3 + 2/(3\lambda)}\log(n)^{1/3 + 2/(3\lambda)} \in I\}, & \ \text{under (C1), (C2), and (P)}, \\
		\{x \in I: \, x \pm (2/\lambda)^{2/3}n^{-1/3}\log(n) \in I\}, & \ \text{under (C1), (C2), and (E)}.
	\end{dcases}
	\]
\end{thm}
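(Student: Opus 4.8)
The plan is to bound the deviation $\sup_{y,x}|\tilde M_{n;x}(y)-M_x(y)|$ by combining a stochastic-process argument for the empirical version of $M_x$ with the contraction property of antitonic regression. First I would decompose the error into a \emph{truncation part} and a \emph{bulk part}. Because $\tilde M_{n;x}(y)$ and $M_x(y)$ are nondecreasing in $y$ and satisfy $M_x(y)=\mathbb E[(y-Y)_+\mid X=x]$, their difference is automatically small for $y$ below the smallest observation, and for large $y$ it is controlled by the tail of $F_x$: under (P) one has $\int_y^\infty \mathbb P(Y>t\mid X=x)\,dt = O(y^{-\lambda+1})$, and under (E) it is $O(\exp(-\lambda y)/\lambda)$. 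So I would fix a truncation level $T_n$ (polynomial in $\log n$ under (E), a power of $n$ under (P)) chosen so that the contribution of $\{|Y|>T_n\}$ to both $h_i(y)$ and $M_x(y)$ is of smaller order than the target $c_n$, with asymptotic probability one; this uses (P)/(E) together with a union bound over the $O(n)$ relevant thresholds and a Borel--Cantelli / first-moment estimate on $\#\{i: |Y_{ni}|>T_n\}$.

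For the bulk part, the key observation is the min-max representation \eqref{eq:minmax}: $\tilde M_{n;x_i}(y)$ is a min-max of block averages of the $h_s(y)$, and $M_{x}(y)$ is (nearly) the corresponding min-max of block averages of the true conditional means $M_{x_s}(y)$, up to the monotonicity of $M_x$ in $x$ which holds by the $\icv$-assumption. Hence $|\tilde M_{n;x_i}(y)-M_{x_i}(y)|$ is dominated by a maximum over blocks $[k,j]$ of
\[
\Bigl|\tfrac{1}{\sum_{s=k}^j w_s}\sum_{s=k}^j w_s\bigl(h_s(y)-M_{x_s}(y)\bigr)\Bigr|
\]
plus, in the continuous case, an approximation term $L|u-v|$ over a block of covariate values, which is where (C2) and the choice of $\delta_n=n^{-1/3+2/(3\lambda)}\log(n)^{1/3+2/(3\lambda)}$ (resp.\ $(2/\lambda)^{2/3}n^{-1/3}\log n$) enters. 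The first term is a weighted empirical average of the bounded (after truncation) variables $(y-Y_{ni})_+\wedge T_n - M_{x_i}(y)$; for fixed $y$ it is controlled by a Bernstein/Hoeffding bound, and uniformity in $y$ follows because $y\mapsto (y-Y_{ni})_+$ is $1$-Lipschitz, so a grid of $O(n)$ points plus Lipschitz interpolation suffices, and uniformity in the block $[k,j]$ and in $i$ costs only a polynomial-in-$n$ union-bound factor absorbed into the $\log n$ powers. In the $K$-sample case one also needs $\min_j w_j\ge np/2$ with asymptotic probability one, which follows from (K) by a Hoeffding bound; in the continuous case one needs every subinterval of length $\delta_n$ to contain $\gtrsim np\delta_n$ covariates, which follows from (C1).

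Putting the pieces together, under (K) the effective sample size per block is $\gtrsim np$, the bounded variables have range $O(T_n)$, and balancing $T_n\cdot\sqrt{\log n/(np)}$ against the tail remainder gives $T_n\asymp (n/\log n)^{1/\lambda}$ and hence $c_n\asymp p^{-1/2}n^{-1/2+1/\lambda}\log(n)^{1/2+1/\lambda}$ under (P), and $T_n\asymp \lambda^{-1}\log n$, $c_n\asymp p^{-1/2}\lambda^{-1}n^{-1/2}\log(n)^{3/2}$ under (E); the explicit constants $4$ and $8$ come from tracking the Bernstein constants and the two contributions (empirical deviation plus truncation). In the continuous case the block size is $\gtrsim np\delta_n$, so the empirical deviation over a block is $\asymp T_n\sqrt{\log n/(np\delta_n)}$ and the approximation error is $\asymp L\delta_n$; optimizing over $\delta_n$ and $T_n$ jointly — i.e.\ equating the three terms $L\delta_n$, $T_n\sqrt{\log n/(np\delta_n)}$, and the tail remainder in $T_n$ — yields the stated $n^{-1/3}$-type rates and the stated forms of $I_n$, the $n^{-1/3}$ reflecting the usual isotonic-regression exponent. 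I expect the main obstacle to be the uniform-in-$y$-and-in-block control of the weighted empirical process in the continuous case while keeping the constants explicit: one has to be careful that the union bound over $O(n^2)$ block endpoints, over an $O(n)$-point $y$-grid, and over the covariate-localization windows only inflates the deviation by a fixed power of $\log n$, and that the truncation level, the grid spacing, and $\delta_n$ are chosen consistently so that all error contributions are simultaneously of order $c_n$. The $K$-sample case is comparatively routine once the truncation argument is in place; the continuous case is where the three-way optimization and the bookkeeping of constants require the most care.
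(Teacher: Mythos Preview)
Your plan matches the paper's proof closely: truncate at a level $\gamma_n$, prove a uniform-in-$y$ concentration bound for block averages of the truncated $(y-Y_{ni})_+$, take a union bound over the $O(n^2)$ blocks, and then invoke either the contraction property of antitonic regression (discrete case) or the min-max formula together with (C2) and a localization window $\delta_n$ (continuous case), balancing terms to get the stated rates. The only methodological difference is that the paper gets uniformity in $y$ not via a grid-plus-Lipschitz argument but by writing $(y-Z)_+-\mathbb E[(y-Z)_+]=\int_a^y[\one\{Z\le s\}-F(s)]\,ds$ and applying a DKW-type bound for non-identically distributed summands, which is a bit cleaner and avoids an extra union-bound factor.

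One concrete slip to fix: your truncation level $T_n\asymp(n/\log n)^{1/\lambda}$ under (P) is too small. With that choice $n\,\mathbb P(|Y|>T_n)\asymp\log n\to\infty$, so you cannot argue that truncation has no empirical effect with asymptotic probability one, and a ``first-moment estimate on $\#\{i:|Y_{ni}|>T_n\}$'' does not help because the exceeding observations can be arbitrarily large. The paper takes $\gamma_n=(n\log n)^{1/\lambda}$ under (P) (and $\gamma_n=2\lambda^{-1}\log n$ under (E)), so that $n\,\mathbb P(|Y|>\gamma_n)\to 0$ and the empirical truncation error is exactly zero on a high-probability event; the bias from truncating the expectation is then shown separately to be $o(n^{-1/2})$. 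This larger $\gamma_n$ is what produces the $\log(n)^{1/2+1/\lambda}$ exponent in the (K)+(P) rate.
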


In the $K$-sample case, Theorem \ref{thm:rate} implies uniform consistency at a rate of at least $(\log(n)/n)^{1/4}$ if the distribution functions $F_x(y)$ are Lipschitz continuous in $y$ and have exponential tails. This is slower than the $n^{-1/2}$-rate of the empirical distribution functions stratified by the $K$ covariate values, and suggests that this lower bound is not always tight. Indeed, if the conditional CDFs $F_j$, $j = 1, \dots, K$, have support on disjoint, pointwise increasing intervals, then $\hat{F}_{n;j}$ are equal to the ECDFs of the corresponding subsamples and hence known to converge at the faster rate. Nevertheless, the result extends the ones from the current literature. In the two-sample case $K = 2$, \citet{Rojo2003} establish strong uniform convergence and pointwise but not uniform root-$n$ convergence for their estimator, while \citet{Elbarmi2009} only prove strong uniform consistency, but do not derive rates of convergence.

For a continuously distributed covariate and exponential tails, $\tilde{M}_x(y)$ converges uniformly in $x$ and $y$ at a rate of $n^{-1/3}$ up to a logarithmic factor, which is known to be the global rate of convergence of the isotonic regression estimator. When the conditional distributions have power tails with exponent $\lambda$, the rate becomes slower by a factor of $n^{2/(3\lambda)}$. In general, the global $n^{-1/3}$-rate of convergence for isotonic regression does not require the assumption of exponential tails, but the results across the literature are not directly comparable. For example, \citet{Zhang2002} shows that with bounded second moments, the risk of the isotonic mean regression estimator, i.e.~the root mean squared error at the design points, scales at a rate of $n^{-1/3}$, whereas \citet{Yang2019} prove uniform consistency with the same rate (up to logarithmic factors) in the supremum norm under sub-gaussianity of the error terms. Theorem \ref{thm:rate} yields a stronger statement since convergence is also uniform in the parameter $y$, and with exponential tails it still matches the optimal global rate up to the logarithmic factor. For $\hat{F}_{x}(y)$, Theorem \ref{thm:rate} implies a rate of at least $n^{-1/6}$ under the favorable assumption (E) and Lipschitz continuity of $F_x(y)$ in $y$.

Isotonic regression for the mean has the property that it automatically adapts to the local smoothness of the underlying function; see for example \citet{Guntuboyina2018, Yang2019}. A slight adaptation in the proof of Theorem \ref{thm:rate} shows that this is also true for the estimator $\tilde{M}_x(y)$. For example, if $M_x(y)$ is constant in $x \in I$, for all fixed $y$, then the bound on the error under (E) becomes $8p^{-1/2}\lambda^{-1}\log(n)^{2} n^{-1/2}$ and is thus of order $n^{-1/2}$ up to the logarithmic factor. Similarly, rates in between $n^{-1/3}$ and $n^{-1/2}$ can be obtained when $|M_x(y) - M_{x'}(y)| \leq L|x-x'|^{v}$ for $x \in I$ and all $y$, with suitable $v > 1$. The same phenomenon occurs in part (i) of Proposition \ref{prop:consistency}, where faster convergence rates are obtained for larger exponents $\beta$ in the Hölder continuity assumption.

\section{Inference} \label{sec:inference}
Constructing confidence intervals for conditional distributions under stochastic order constraints is difficult. This section does not provide a solution to this problem for the $\icv$- and $\icx$-order, but it gives an overview of potential approaches and their pitfalls.

The limiting distributions of shape-constrained estimators are often rather complicated and depend on unknown characteristics of the underlying functions \citep{Park2012, Guntuboyina2018}, which makes it difficult to derive confidence intervals from them. For $\icv$-order in the $K$-sample case, \citet{Zhang2015} have shown  $n^{1/2}[M_{x_i}-\tilde{M}_{x_i}]$ converges to a process which is described by a min-max formula similar to \eqref{eq:minmax} applied to integrated Brownian bridges with rescaled time. To construct confidence intervals for $M_{x_i}(y)$ or $F_{x_i}(y)$ from these distributions, one would need knowledge about the true CDFs and about the groups where the constraints are binding, i.e.~hold with equality, which is usually not available in practice.

A more promising approach seem to be bootstrap methods, which have been explored by \citet{Park2012} in the case of first order stochastic dominance. However, the bootstrap is delicate in shape restricted regression, where it was shown that some popular bootstrap methods can be inconsistent \citep[][Section 4.2.1, and the references therein]{Guntuboyina2018}, and so far there has been no thorough analysis of this phenomenon in the case of estimating conditional distributions. For the maximum likelihood estimator of a decreasing density, the Grenander estimator, it has been shown that suitable smoothing can yield a consistent bootstrap estimator \citep{Groeneboom2010}. This suggests to study the smoothed estimator
\[
\hat{K}_{x;h}(y) = \int_{-\infty}^{\infty} K\left(\frac{y-t}{h}\right) \, d\hat{F}_{x}(t),
\]
where $K$ is a smooth CDF, $h > 0$ a bandwidth, and $\hat{F}_x$ an estimator under stochastic order restrictions. This modification, which has not been studied for smoothing restricted estimators of conditional distributions so far, may also be interesting from a pure estimation perspective as an alternative to the bagging procedure suggested in Section \ref{sec:estimation}.

Finally, \citet{Yang2019} developed a method for constructing simultaneous confidence intervals in isotonic mean regression which requires no tuning parameters. These confidence bands only assume subgaussian error distributions, and are similar to the restricted confidence intervals by \citet[Section 5]{Elbarmi2005} under first order stochastic dominance. In the case of the increasing concave order, they might help to construct confidence bands for $M_x(y)$ for varying $x$ and fixed $y$, but it is not obvious whether this allows to derive practically useful bands for the derivative $F_x(y) = M_x'(y)$.

\section{Simulations} \label{sec:simulations}
In the following simulation examples, the $\icv$- and $\icx$-order constrained estimators are compared to competitors in terms of the $L_1$ distance between the estimated and the true CDFs, and in terms of the mean absolute error (MAE) of quantile estimates, 
\[
\mathrm{L}_1(\hat{F}_n, F) = \mathbb{E}\left( \int_{-\infty}^{\infty} |\hat{F}_n(y) - F(y)| \, dy\right), \quad
dq_{\gamma}(\hat{F}_n, F) = \mathbb{E}\Big(|\hat{F}_n^{-1}(\gamma) - F^{-1}(\gamma)|\Big),
\]
where $\hat{F}_n$ denotes an estimator for $F$ and the expected value is taken over the sampling distribution of $\hat{F}_n$ for the given sample size. The expected value in the definition of $\mathrm{L}_1$ and $dq_{\gamma}$ is approximated by the empirical mean over $10'000$ simulations for the examples with discrete and $5'000$ simulations for those with continuously distributed covariates.

With covariate values $X \in [1, 4]$, the following three settings are considered:
\begin{align}
	Y_1 & = X^{1/2} + \big[1 + (X - 2) / (1 + (X - 2)^{2})^{1/2}\big] \varepsilon, \ \varepsilon \sim \text{Student}(\text{df} = 10), \label{eq:sim_student} \\
	Y_2 & \sim \text{Gamma}(\text{shape} = X, \, \text{rate} = X^{9/10}), \label{eq:sim_gamma} \\
	Y_3 & \sim \text{Beta-binomial}(n = 50, \, \alpha = X^3, \, \beta = 1 + X^3)	. \label{eq:sim_beta}
\end{align}
The conditional distributions of $Y_1$ given $X$ are ordered in the increasing convex order, and the those of $Y_2$ and $Y_3$ with respect to the increasing concave order; see Figure \ref{fig:sim_illustration} (a) for an illustration. In the $K$-sample case, $X$ takes values in $\{1, 4\}$, $\{1, 2, 3, 4\}$, and $\{1, 1.5, \dots, 3.5, 4\}$, i.e.~$K = 2, \, 4, \, 7$, which allows comparing the change in estimation error at previously available values of $X$ when the number of samples increases. For simulation examples with continuous covariate, the sample of $X$ is generated independent and uniformly distributed on $[1, 4]$. In all simulations the distinct observed values of the response variable are taken as grid for the computation of the $\icv$- and $\icx$-order constrained estimators.

\begin{table}
	\caption{Relative improvement in mean $L_1$ distance, mean absolute error of quantile estimates of $\icv$- and $\icx$-order constrained estimator compared to ECDF stratified by the value of $X$, for $K = 2, \, 4, \, 7$ and group sizes of $n = 30, \, 50$. \label{tab:ksample}}
	\bigskip
	\resizebox{\columnwidth}{!}{%
		\begin{tabular}{rr|*{4}{r}|*{4}{r}|*{4}{r}}
			\toprule
			\multicolumn{2}{c|}{$n = 30$} & \multicolumn{4}{c|}{Student \eqref{eq:sim_student}} & \multicolumn{4}{c|}{Gamma \eqref{eq:sim_gamma}} & \multicolumn{4}{c}{Beta-binomial \eqref{eq:sim_beta}} \\
			$K$ & $X$ & $\mathrm{L_1}$ & $dq_{0.1}$ & $dq_{0.5}$ & $dq_{0.9}$ & $\mathrm{L_1}$ & $dq_{0.1}$ & $dq_{0.5}$ & $dq_{0.9}$ & $\mathrm{L_1}$ & $dq_{0.1}$ & $dq_{0.5}$ & $dq_{0.9}$ \\
			\midrule
			2 & 1.0 & 0.00 & 0.00 & 0.00 & 0.00 & 0.02 & 0.00 & 0.00 & 0.07 & 0.00 & 0.00 & 0.00 & 0.00\\
			& 4.0 & 0.00 & 0.00 & 0.00 & 0.00 & -0.03 & 0.00 & 0.00 & -0.04 & 0.00 & 0.00 & 0.00 & 0.00\\
			\midrule
			4 & 1.0 & 0.00 & 0.00 & 0.00 & 0.00 & 0.06 & 0.00 & 0.05 & 0.12 & 0.00 & 0.00 & 0.00 & 0.00\\
			& 2.0 & -0.01 & -0.10 & 0.01 & -0.01 & 0.08 & 0.05 & 0.12 & 0.19 & 0.05 & 0.00 & 0.05 & 0.09\\
			& 3.0 & 0.06 & 0.19 & 0.08 & 0.05 & 0.09 & 0.13 & 0.18 & 0.21 & 0.13 & 0.09 & 0.09 & 0.12\\
			& 4.0 & 0.05 & 0.16 & 0.10 & 0.10 & 0.03 & 0.11 & 0.11 & 0.14 & 0.08 & 0.09 & 0.15 & 0.10\\
			\midrule
			7 & 1.0 & 0.00 & -0.01 & 0.00 & 0.00 & 0.08 & 0.04 & 0.10 & 0.13 & 0.01 & 0.00 & 0.00 & 0.01\\
			& 1.5 & -0.02 & -0.10 & 0.00 & -0.01 & 0.14 & 0.13 & 0.21 & 0.29 & 0.05 & 0.00 & 0.04 & 0.05\\
			& 2.0 & 0.01 & 0.01 & 0.05 & 0.00 & 0.16 & 0.20 & 0.26 & 0.35 & 0.12 & 0.05 & 0.12 & 0.14\\
			& 2.5 & 0.08 & 0.23 & 0.15 & 0.08 & 0.18 & 0.25 & 0.31 & 0.37 & 0.21 & 0.14 & 0.22 & 0.24\\
			& 3.0 & 0.13 & 0.33 & 0.23 & 0.19 & 0.17 & 0.28 & 0.32 & 0.36 & 0.28 & 0.24 & 0.22 & 0.29\\
			& 3.5 & 0.14 & 0.33 & 0.26 & 0.25 & 0.15 & 0.28 & 0.30 & 0.32 & 0.27 & 0.29 & 0.29 & 0.20\\
			& 4.0 & 0.09 & 0.23 & 0.19 & 0.21 & 0.07 & 0.19 & 0.19 & 0.22 & 0.15 & 0.15 & 0.26 & 0.19\\
			\midrule
			\midrule
			\multicolumn{2}{c|}{$n = 50$} & \multicolumn{4}{c|}{Student \eqref{eq:sim_student}} & \multicolumn{4}{c|}{Gamma \eqref{eq:sim_gamma}} & \multicolumn{4}{c}{Beta-binomial \eqref{eq:sim_beta}} \\
			$K$ & $X$ & $\mathrm{L_1}$ & $dq_{0.1}$ & $dq_{0.5}$ & $dq_{0.9}$ & $\mathrm{L_1}$ & $dq_{0.1}$ & $dq_{0.5}$ & $dq_{0.9}$ & $\mathrm{L_1}$ & $dq_{0.1}$ & $dq_{0.5}$ & $dq_{0.9}$ \\
			\midrule
			2 & 1.0 & 0.00 & 0.00 & 0.00 & 0.00 & 0.01 & 0.00 & 0.00 & 0.05 & 0.00 & 0.00 & 0.00 & 0.00\\
			& 4.0 & 0.00 & 0.00 & 0.00 & 0.00 & -0.02 & 0.00 & 0.00 & -0.02 & 0.00 & 0.00 & 0.00 & 0.00\\
			\midrule
			4 & 1.0 & 0.00 & 0.00 & 0.00 & 0.00 & 0.04 & 0.00 & 0.02 & 0.10 & 0.00 & 0.00 & 0.00 & 0.00\\
			& 2.0 & -0.01 & -0.07 & 0.00 & 0.00 & 0.05 & 0.02 & 0.08 & 0.14 & 0.03 & 0.00 & 0.02 & 0.08\\
			& 3.0 & 0.04 & 0.13 & 0.05 & 0.04 & 0.07 & 0.08 & 0.14 & 0.20 & 0.09 & 0.05 & 0.04 & 0.07\\
			& 4.0 & 0.04 & 0.13 & 0.07 & 0.07 & 0.03 & 0.07 & 0.09 & 0.11 & 0.06 & 0.06 & 0.12 & 0.05\\
			\midrule
			7 & 1.0 & 0.00 & 0.00 & 0.00 & 0.00 & 0.07 & 0.02 & 0.07 & 0.13 & 0.00 & 0.00 & 0.00 & 0.00\\
			& 1.5 & -0.01 & -0.07 & 0.00 & 0.00 & 0.11 & 0.09 & 0.16 & 0.25 & 0.03 & 0.00 & 0.02 & 0.03\\
			& 2.0 & 0.00 & -0.04 & 0.02 & -0.01 & 0.13 & 0.14 & 0.21 & 0.29 & 0.08 & 0.02 & 0.08 & 0.12\\
			& 2.5 & 0.05 & 0.15 & 0.09 & 0.03 & 0.15 & 0.19 & 0.26 & 0.32 & 0.16 & 0.09 & 0.19 & 0.20\\
			& 3.0 & 0.11 & 0.28 & 0.19 & 0.15 & 0.16 & 0.23 & 0.28 & 0.34 & 0.23 & 0.17 & 0.16 & 0.25\\
			& 3.5 & 0.13 & 0.27 & 0.23 & 0.22 & 0.14 & 0.24 & 0.27 & 0.31 & 0.25 & 0.26 & 0.26 & 0.16\\
			& 4.0 & 0.08 & 0.20 & 0.16 & 0.18 & 0.07 & 0.16 & 0.16 & 0.18 & 0.14 & 0.11 & 0.26 & 0.12\\
			\bottomrule 
			
		\end{tabular}
	}%
\end{table}

Table \ref{tab:ksample} shows the performance order restricted estimators compared to the ECDF in the $K$-sample case, with fixed group sizes $n = 30, \, 50$ as in \citet{Elbarmi2009}. One would expect that the improvement of the restricted estimator over the ECDF is larger when more constraints are binding. That is, when the conditional CDFs $F_x$ (or the functions $M_x$) are close to each other for the different values of $x$, and when the sample size is small, then the restricted estimator may gain precision by pooling information across the $K$ groups. The case study confirms this intuition. For $K = 2$ (groups $X = 1$ and $X = 4$) the ECDF already satisfies the order constraints in most cases and has similar estimation error as the constrained estimator. However, as new groups are included with covariate value $X$ in between those of the previously present groups, the order restricted estimators benefit from the larger total sample size and achieve a lower estimation error both globally, i.e.~in $\mathrm{L}_1$-distance, and for most quantiles considered. This improvement is larger for $n = 30$ than for $n = 50$, since with the larger sample size the ECDFs are already closer to the true distributions and require fewer corrections to satisfy the order constraints.

\begin{figure}[h]
	\centering
	\caption{(a) Deciles of the conditional distributions in the simulation examples \eqref{eq:sim_student}, \eqref{eq:sim_gamma}, \eqref{eq:sim_beta}. The median is depicted as a dashed line. (b) Quantile curves (levels $0.1, 0.3, 0.5, 0.7, 0.9$) for simulation example \eqref{eq:sim_gamma} together with $\icv$-ordered estimator ($n = 500$; ICV and subagging variant $\mathrm{ICV}_{\mathrm{sbg}}$ with $50$ subsamples of size $250 = n/2$). \label{fig:sim_illustration}}
	\bigskip
	\includegraphics[width = 0.9\textwidth]{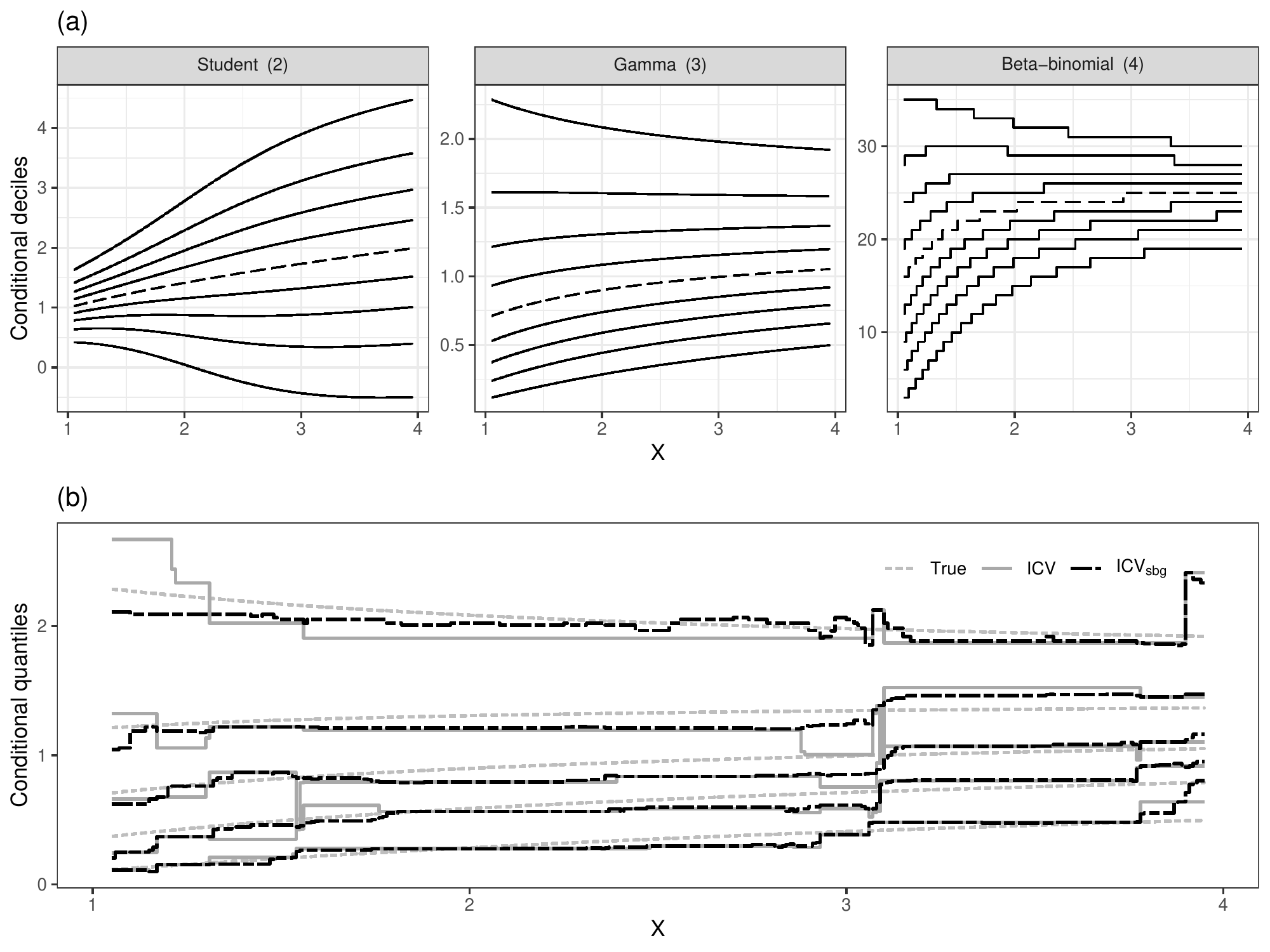}
\end{figure}

In the continuous case, the estimator under first order stochastic dominance by \citet{Moesching2020} is chosen as competitor. This comparison is of interest since in situations where a variable $Y$ depends monotonically on $X$ one might want to impose first order stochastic dominance as a restriction, but it is sometimes not fully clear if the monotone relationship is strong enough such that all conditional exceedance probabilities $1-F_x(y)$ are increasing in $x$ for all $y$, or if there might be crossings of the CDFs.

\begin{figure}[h]
	\centering
	\caption{Relative improvement in $\mathrm{L}_1$ distance and mean absolute error of quantile estimates of the $\icv$- and $\icx$-order constrained estimator compared to the estimator under first order stochastic dominance, for $n = 500$. The solid lines show the improvement when the estimators are computed on the full sample, and the dashed lines for a subagging variant with $50$ subamples of size $250$. \label{fig:sim_continuous}}
	\bigskip
	\includegraphics[width = 0.9\textwidth]{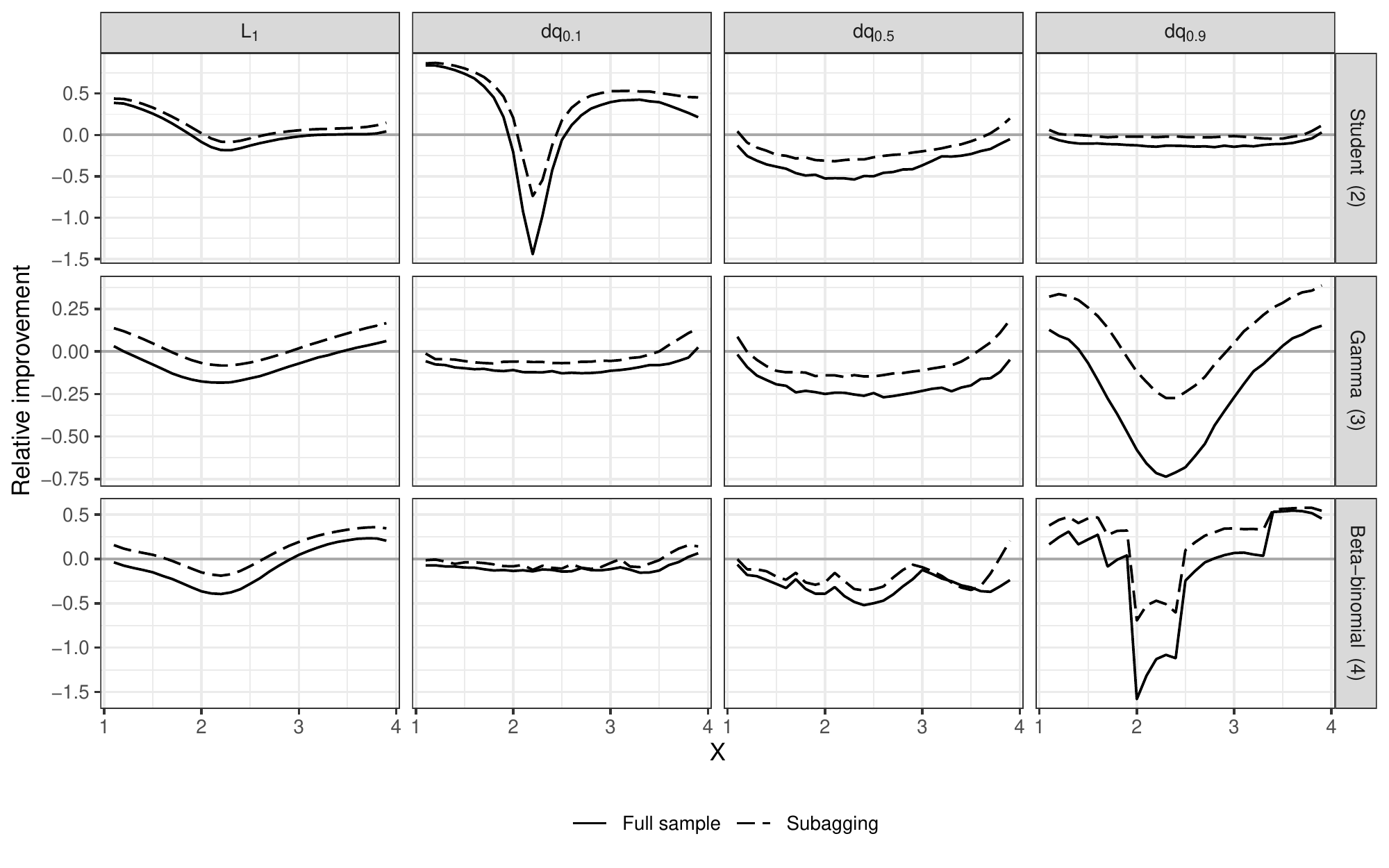}
\end{figure}

As Figure \ref{fig:sim_illustration} (a) shows, for the simulations \eqref{eq:sim_gamma} and \eqref{eq:sim_beta} the conditional quantile curves up to the seventh decile are all increasing in the covariate $X$, and so are the conditional quantile curves above the third decile in \eqref{eq:sim_student}. Therefore, although first order stochastic dominance is violated, it serves as a reasonable approximation in these problems. Figure \ref{fig:sim_continuous} shows the relative performance of the estimators for $n = 500$. The estimator by \citet{Moesching2020} achieves a lower absolute error for the median, for the $0.1$-quantile in \eqref{eq:sim_gamma} and \eqref{eq:sim_beta}, and for the $0.9$-quantile in \eqref{eq:sim_student}, uniformly over all values of $X$. This is due to the fact that the corresponding quantile curves are monotone and estimation under this correct constraint is more efficient than with the weaker $\icv$- and $\icx$-constraints. The picture is different for the low quantiles in \eqref{eq:sim_student} and the high quantiles in \eqref{eq:sim_gamma} and \eqref{eq:sim_beta}, where the conditional quantile curves are antitonic and the best isotonic approximation is constant, which generally provides a poor fit. Figure \ref{fig:sim_continuous} also compares the errors of subagging variants of the estimators; see Figure \ref{fig:sim_illustration} (b) for an illustration of the estimated quantile curves in the Gamma example. For both estimators, $50$ random subsamples of size $250 = n/2$ are drawn from the data, and the conditional CDFs from each fit to the subsamples are averaged pointwise. It can be seen that the $\icv$- and $\icx$-order constrained estimators benefit more from subagging than the estimator with first order stochastic dominance. A comparison of different subagging variants and results for other sample sizes are given in the Appendix \ref{app:sim}.

There are many other methods for estimating conditional distributions than the shape-constrained regression methods discussed so far in this article, such as models based on parametric families \citep{Rigby2005}, nonparametric kernel methods \citep{Li2008}, or quantile regression \citep{Koenker2005}, only to name a few. In general, the advantage of shape-constrained estimators is that they are free from tuning parameters and automatically adapt to the (unknown) smoothness of the underlying functions which are to be estimated, but other estimation methods can achieve a smaller estimation error when their assumptions are satisfied. Indeed, in a simulation and case study, \citet[Sections 4 and 5, Supplement S4]{Henzi2021c} have found that estimators with first order stochastic dominance constraints are often not superior to competitors in terms of $L_2$ estimation error. This can be expected also for the estimators proposed in this article, which do not generally outperform the estimator under first order stochastic dominance.

\section{Case study} \label{sec:application}
It is well known that in the the evaluation of point forecasts, a wrongly specified loss function, such as the absolute error for comparing mean forecasts, may lead to counterintuitive results and distorted forecast rankings \citep{Gneiting2011}. This causes problems in the interpretation of economic surveys, where respondents are often asked to issue point predictions for future quantities, but it is unspecified what functional of their (hypothetical) predictive distribution is meant. As a remedy, various tests of forecast rationality, or forecast calibration, have been proposed in the literature. A recent contribution is by \citet{Dimitriadis2019}, who develop tests for the hypothesis that a given point forecast is the mean, median, or mode functional, or a convex combination of the three. The case study in this section demonstrates that the estimation of conditional distributions can complement such tests to gain additional information for the interpretation of point forecasts.

If $X$ denotes a point forecast and $Y$ the observation, the hypothesis of forecast rationality with respect to a functional $T$ can be defined as $X = T[\mathcal{L}(Y \mid X)]$, where $\mathcal{L}(Y \mid X)$ denotes the conditional law of $Y$ given the forecast $X$. This formulation is a special but important case of equation (2.1) in \citet{Dimitriadis2019}, which allows including additional information available to the forecaster for conditioning. If $T$ is the mean functional, then forecast rationality is equivalent to the moment condition $\mathbb{E}(Y - X \mid X) = 0$. For the median, the corresponding condition is $\mathbb{E}(\one\{Y \geq X\}\mid X) = 0.5$, provided that $\mathcal{L}(Y \mid X)$ is a continuous distribution. Based on such conditions, \citet{Dimitriadis2019} developed asymptotic tests for forecast rationality. 

Distributional regression provides a different, more qualitative approach to this problem. If the conditional distributions $\mathcal{L}(Y \mid X = x)$ were known, one could easily derive the functional of interest $T(x) = T[\mathcal{L}(Y \mid X = x)]$ and detect violations of forecast rationality by directly comparing $T(x)$ and $x$. Estimators with stochastic order restrictions allow to mimic this ideal situation, without having to impose restrictive or implausible assumptions on the conditional distributions. For sufficiently precise point forecasts $X$, one would expect that the actual observation $Y$ tends to attain higher values as $X$ increases. Moreover, estimating $\mathcal{L}(Y \mid X)$ under stochastic order constraints only requires the ranks of the forecasts $X_1, \dots, X_n$ in a sample, but not their values. This makes a comparison of $X_i$ and $T(X_i)$ sensible, because $X_1, \dots, X_n$ themselves have not been provided to the model. Other estimation methods, such as kernel regression \citep{Li2008}, generally do not have this property.

To illustrate the approach, consider the data example from Section 5.1 of \citet{Dimitriadis2019}. In the Labor Market Survey by the Federal Reserve Bank of New York, respondents are asked three times per year to report their annualized income in four months. The sample analysed here ranges from March 2015 to November 2019. Some respondents participate in several rounds of the survey, and only the first round is included for those individuals which occur several times to obtain independent observations. Additionally, like in \citet{Dimitriadis2019}, observations with very high or low expectations or income (above 300'000 or below 1000, 4.0\% of the sample; an upper bound of 1 million was used in \citet{Dimitriadis2019}) are removed since the data is very sparse and uninformative for such values, as are cases when the ratio of expectation and income or the inverse ratio is between $9$ and $13$ (27 instances), which might be due to misplaced decimal points or erroneously reporting monthly instead of annualized income. The remaining sample consists of 3161 observations. The survey of consumer expectations (SCE; \textcopyright \ 2013-2020 Federal Reserve Bank of New York) data for this case study are available without charge at \url{https://www.newyorkfed.org/microeconomics/sce}, and may be used subject to license terms posted there. The New York Fed disclaims any responsibility for the analysis and interpretation of Survey of Consumer Expectations data in this article.

\begin{figure}
	\centering
	\caption{(a) Expected and realized income in the case study. (b) ECDF of the realized income for binned expectations. The boundaries of the bins are the $0.1$ to $0.4$-quantile of the income expectations. (c) Estimated quantile curves (levels $0.1, 0.3, 0.5, 0.7, 0.9$). (d) Mean, median and mode functional computed from the estimated conditional distributions (for expectations and incomes below 200'000).  \label{fig:case_study}}
	\bigskip
	\includegraphics[width = 0.9\textwidth]{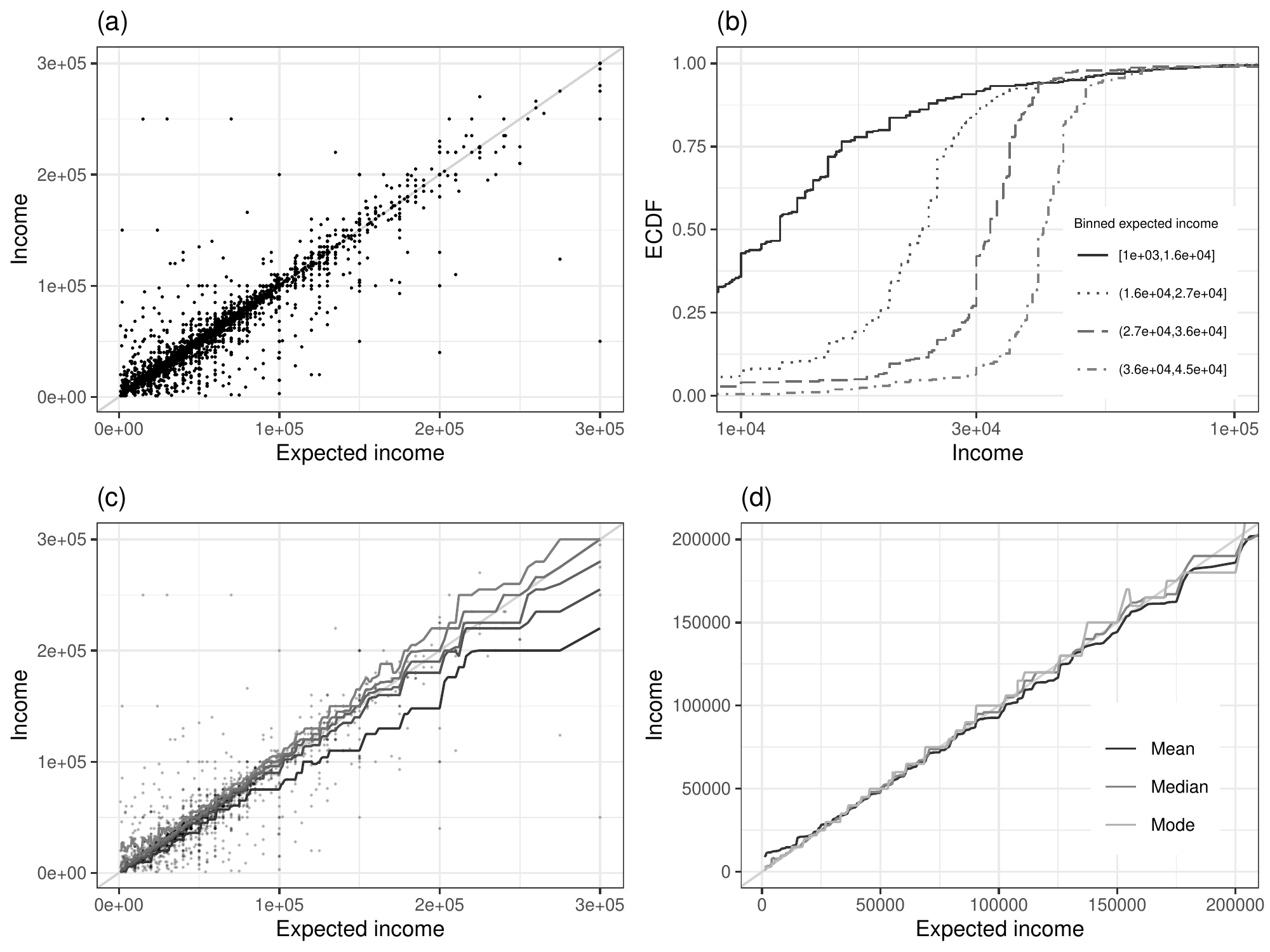}
\end{figure}

Panels (a) and (b) of Figure \ref{fig:case_study} illustrate the joint distribution of the income expectations and realizations. There is a strong monotone relationship, with a Pearson correlation of $0.92$, but interestingly, for the lowest deciles of the income expectations, the corresponding conditional CDFs of the observed income cross in the upper tail, indicating a violation of first order stochastic dominance. The intuition behind these results is that individuals are generally able to predict their income well, which explains the strong monotone relationship, but low income expectations are sometimes overly pessimistic. \citet{Rozsypal2017} have found with different data that people with lower income tend to have pessimistic expectations, both conditionally on potential confounder covariates and unconditionally. The increasing concave order can accomodate this situation, as it allows that the conditional CDFs cross in the upper tail. To estimate the conditional distributions, a subagging version of the $\icv$-order restricted estimator with $50$ subsamples of half of the total sample size is applied. From the estimated distributions, the mean, median, and mode functional are then computed, with the mode taken as the location of the largest jump of the conditional CDFs, which are piecewise constant stepfunctions. Panels (c) and (d) of Figure \ref{fig:case_study} display estimated quantile curves and the three functionals depending on the income expectation.

For the mean functional, the forecast rationality test of \citet{Dimitriadis2019} yields a p-value of $1.7\cdot 10^{-12}$, computed with the \textsf{R} package {\tt fcrat} available on \url{https://github.com/Schmidtpk/fcrat}. As can be seen in Figure \ref{fig:case_study}, the conditional mean curve lies above the bisector for expectations below 25'000, and below the bisector when the expectation exceeds 75'000, so there is indeed a systematic deviation of the income expectation from the estimated mean. For the median and the mode functional, the p-values of the rationality test are $4.5\cdot 10^{-8}$ and $0.93$, respectively. This huge difference in the p-values is in contrast to the curves in Figure \ref{fig:case_study} (d), where the expected income does not seem to deviate systematically from either functional. A simulation reveals that for data where the outcome $Y$ may be exactly equal to $X$, the p-value for the median should indeed be interpreted with care. By taking the estimated medians as new income expectation and simulating new observations from the estimated conditional distributions, one obtains new data sets where the income expectation equals the median of the underlying distribution by construction. Over 10'000 simulations, the rejection rate for the median rationality test is $0.03$, $0.11$ and $0.19$ at the levels $0.01$, $0.05$, and $0.10$ -- the test is anticonservative. The reason for the non-validity of the median rationality test is likely to be the discreteness in the data: The realized incomes only take $526$ distinct values with a sample size of $n = 3161$, and in $22\%$ of the cases the income expectation is exactly equal to the realized income. Hence the condition $\mathbb{E}(\one\{Y \geq X\}\mid X) = 0.5$ may be violated even if $X$ is equal to the conditional median due a point mass of the conditional distributions at the expected income $X$.

In conclusion, the $\icv$-constrained estimator suggests that both median and mode could rationalize the income expectations, and it confirms that the income expectations should not be interpreted as a mean forecast.

\section{Discussion}
The estimators proposed in this article may extended to more complex settings than univariate covariates $X$.
One avenue for future work is to consider partially ordered instead of real-valued covariates. A partial order relation $\preceq$ on a space $\mathcal{X}$ satisfies the same properties as the usual order of real numbers (transitivity, reflexivity, antisymmetry), but not all elements of $\mathcal{X}$ need to be comparable; an example is the componentwise order of vectors on $\mathbb{R}^p$. To construct estimators in this setting, it suffices to slightly modify the definition
\[
[\tilde{M}_{X_1}(y), \dots, \tilde{M}_{X_n}(y)] \ = \argmin_{\eta \in \mathbb{R}^n: \, \eta_i \geq \eta_j \text{ if } X_i \preceq X_j} \sum_{i = 1}^n [\eta_i - (y - Y_i)_+]^2
\]
from Section \ref{sec:estimation}, with the only difference that $X_i, X_j \in \mathbb{R}^p$ in the $\argmin$ are now compared with respect to the partial order $\preceq$. A similar min-max formula as in \eqref{eq:minmax} also applies in this case (see \citet{Barlow1972}), so that Proposition \ref{prop:welldefined} continues to hold. The computation of $[\tilde{M}_{X_1}(y), \dots, \tilde{M}_{X_n}(y)]$ is still feasible since it is a quadratic minimization problem with linear constraints, for which most statistical software programs provide efficient algorithms. While consistency with partially ordered covariates has been proved for estimation under first order stochastic dominance constraints \citep{Henzi2021c}, it remains an open problem for the orders considered in this article.

The generalization to partially ordered covariates is of interest as it allows to construct nonparametric distributional regression models with shape and scale parameters. In the spirit of \citet{Henzi2021c}, assume that we are interested in estimating the conditional distribution of a variable $Y \in \mathbb{R}$ given a collection of point forecasts $s \in \mathbb{R}^p$, say, predictions from a survey of experts or from different numerical models. This conditional distribution provides a corrected, re-calibrated version of the forecasts. An effective approach is to model $\mathcal{L}(Y \mid S = s)$ with Gaussian distributions $\mathcal{N}(a + b\bar{s}, c + d \sigma(s)^2)$, where $\bar{s}$ and $\sigma(s)^2$ are the mean and variance of the point forecasts $s$, respectively, and the parameters are estimated on training data of past forecasts and observations. \citet{Gneiting2005} apply this approach in weather forecasting, and \citet{Gneiting2010} a slightly more sophisticated model in inflation prediction. When $b, d > 0$, which usually the case or even imposed \citep{Gneiting2005}, the distributions $\mathcal{N}(a + b\bar{s}, c + d \sigma(s)^2)$ are increasing in the $\icx$-order when the vector $x = (\bar{s}, \sigma(s)) \in \mathbb{R}^2$ increases componentwise. Hence, the estimator with $\icx$-order constraints and covariate vector $(\bar{s}, \sigma(s))$ could provide a flexible nonparametric alternative to such a parametric location-scale model.

A further potential extension are distributional single index models in the spirit of \citet{Henzi2021a}, where the covariate $X$ itself is derived from a higher dimensional covariate $Z$ with some dimension reducing function $\theta$, such as $X = \theta(Z) = \alpha^{\top} Z$ for $\alpha, Z \in \mathbb{R}^p$. \citet{Henzi2021a} have shown that when the conditional CDFs $\mathbb{P}(Y \leq y \mid Z)$ of $Y$ only depend on $Z$ via $\theta(Z)$, and when these distributions are increasing in first order stochastic dominance as $\theta(Z)$ increases, then the combination of a consistent estimator $\hat{\theta}_n$ for $\theta$ and a shape constrained estimator applied to $(\hat{\theta}_n(X_i), Y_i)$, $i = 1, \dots, n$, may again yield a consistent estimator of the conditional CDFs. It is an open question whether similar results hold for the increasing concave and convex order.

\section*{Acknowledgements}
This work was supported by the Swiss National Science Foundation. The author is grateful to Johanna Ziegel and Timo Dimitriadis for helpful comments and discussions.

\bibliographystyle{apalike}
\bibliography{isoicxicv_biblio_01.bib}

\appendix

\section{Greatest convex minorants} \label{app:gcm}
Let $I \subseteq \mathbb{R}$ be an interval and $f: I \rightarrow \mathbb{R}$ a function. The greatest convex minorant of $f$ is the pointwise greatest convex function $g$ such that $g(x) \leq f(x)$ for all $x \in I$. It exists if and only if $f$ can be bounded from below by an affine linear function, and if the greatest convex minorant exists, it is unique since the pointwise supremum of convex functions is again convex. By the same reason, if $f_1$ and $f_2$ are functions with greatest convex minorants $g_1$ and $g_2$, then $f_1(x) \geq f_2(x)$ for all $x$ implies that also $g_1 \geq g_2$.

A standard result about isotonic regression \citep[see e.g.][Theorem 1.2.1]{Robertson1988} states that the isotonic regression of $z_1, \dots, z_r$ with weights $w_1, \dots, w_r > 0$, that is, the minimizer of $\sum_{i = 1}^r w_i(\theta_i - z_i)^2$ over all $\theta_1 \leq \dots \leq \theta_r$, equals the left-hand slope of the greatest convex minorant to the function that results from linearly interpolating
\[
(0, 0), \ \left(\sum_{i = 1}^k w_i, \sum_{i = 1}^k w_k z_k \right), \ k = 1, \dots, r.
\]
This result allows to describe right-hand slope of the greatest convex minorant of any piecewise linear function with finitely many knots.

\begin{lem} \label{lem:gcm}
	Let $f: [t_1, t_k] \rightarrow \mathbb{R}$ be piecewise linear with knots at $t_1 < \dots < t_k$ and let $g$ be its greatest convex minorant. Then the right-hand slope of $g$ at $t_1, \dots, t_{k - 1}$ is given by the
	isotonic regression of $[f(t_{i + 1}) - f(t_i)]/[t_{i + 1} - t_i]$ with weights $t_{i + 1} - t_i$, $i = 1, \dots, k-1$.
\end{lem}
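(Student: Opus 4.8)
The plan is to reduce the statement to the standard isotonic regression result of \citet[Theorem~1.2.1]{Robertson1988} recalled above, via an explicit affine change of variables. Two elementary facts make this work: the greatest convex minorant commutes with translations of the domain and with the addition of constants, and the greatest convex minorant of a piecewise linear function with knots $t_1 < \dots < t_k$ is again piecewise linear, in particular affine on each segment $[t_j,t_{j+1}]$, so that its right-hand slope at $t_j$ coincides with its left-hand slope at $t_{j+1}$.

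First I would set $w_i = t_{i+1}-t_i > 0$ and $z_i = [f(t_{i+1}) - f(t_i)]/w_i$ for $i = 1, \dots, r$ with $r = k - 1$. A short computation gives $\sum_{i=1}^{j} w_i = t_{j+1} - t_1$ and $\sum_{i=1}^{j} w_i z_i = f(t_{j+1}) - f(t_1)$, so the points $(0,0)$ and $\bigl(\sum_{i\le j} w_i,\ \sum_{i\le j} w_i z_i\bigr)$, $j = 1, \dots, r$, are exactly the points $(t_j - t_1,\ f(t_j) - f(t_1))$, $j = 1, \dots, k$, i.e. the graph of $f$ shifted by $(-t_1, -f(t_1))$. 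Since $f$ is already piecewise linear with knots at the $t_j$, linearly interpolating these points reproduces the function $\bar f(x) = f(x + t_1) - f(t_1)$ on $[0, t_k - t_1]$. The cited theorem then states that the isotonic regression $\hat\theta_1 \le \dots \le \hat\theta_r$ of the $z_i$ with weights $w_i$ satisfies that $\hat\theta_j$ equals the left-hand slope of the greatest convex minorant $\bar g$ of $\bar f$ at the point $t_{j+1} - t_1$, for $j = 1, \dots, r$.

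Next I would transfer this back to $f$ and $g$. The function $f$ is bounded on the compact interval $[t_1, t_k]$, hence bounded below by a constant, so its greatest convex minorant $g$ exists. The map sending a function $h$ on $[t_1,t_k]$ to $x \mapsto h(x + t_1) - f(t_1)$ on $[0, t_k - t_1]$ is an order-preserving bijection that takes convex functions to convex functions; since $g$ is the pointwise largest convex function dominated by $f$, its image is the pointwise largest convex function dominated by $\bar f$, that is, $\bar g(x) = g(x + t_1) - f(t_1)$. For the piecewise linearity of $g$, note that an affine function $\ell$ satisfies $\ell \le f$ on all of $[t_1, t_k]$ if and only if $\ell(t_i) \le f(t_i)$ for every $i$, because $\ell$ and $f$ are both affine on each $[t_j, t_{j+1}]$; hence $g = \sup\{\ell \text{ affine} : \ell(t_i) \le f(t_i),\ i = 1, \dots, k\}$ is the lower convex envelope of the finite set $\{(t_i, f(t_i)) : i = 1, \dots, k\}$, which is polygonal with vertices among these points and, in particular, affine on each $[t_j, t_{j+1}]$.

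Finally, affineness of $g$ on $[t_j, t_{j+1}]$ makes its right-hand slope at $t_j$ equal to the common slope on that segment, which is its left-hand slope at $t_{j+1}$; by $\bar g(x) = g(x + t_1) - f(t_1)$, the latter equals the left-hand slope of $\bar g$ at $t_{j+1} - t_1$, which the previous step identified as $\hat\theta_j$. Hence the right-hand slope of $g$ at $t_j$ equals $\hat\theta_j$ for $j = 1, \dots, k-1$, which is the assertion. I expect the only step needing genuine care to be this bookkeeping between left- and right-hand slopes at consecutive knots, together with the observation that $g$ develops no knots off the original mesh $\{t_1, \dots, t_k\}$; everything else follows directly from the definition of the greatest convex minorant.
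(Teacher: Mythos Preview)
Your proposal is correct and follows precisely the route indicated by the paper: the paper does not give a separate proof of this lemma but states it as a direct consequence of \citet[Theorem~1.2.1]{Robertson1988}, and you have simply spelled out the affine change of variables and the left-/right-slope bookkeeping that make this reduction go through. The additional care you take in arguing that the greatest convex minorant of a piecewise linear function has no new knots is a welcome detail the paper leaves implicit.
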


The following lemma is known as Marshall's Inequality.
\begin{lem} \label{lem:marshall}
	Let $\mathcal{I} \subseteq \mathbb{R}$ be an interval and $f: \mathcal{I} \rightarrow \mathbb{R}$ a function, and let $g$ be the greatest convex minorant of $f$ and $h: \mathcal{I} \rightarrow \mathbb{R}$ any convex function. Assume that $\|f - h\|_{\infty} < \infty$, where $\|\cdot\|_{\infty}$ is the usual supremum norm of functions. Then, $\|g - h\|_{\infty} \leq \|f - h\|_{\infty}$.
\end{lem}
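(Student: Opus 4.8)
The plan is to reduce everything to the single defining property of the greatest convex minorant, namely that $g$ is the pointwise largest convex function lying below $f$, combined with the trivial observation that adding a constant to a convex function leaves it convex. Write $\varepsilon = \|f - h\|_\infty$, which is finite by hypothesis; if $\varepsilon = 0$ the conclusion is immediate, so assume $\varepsilon > 0$.

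First I would note that $f(x) \ge h(x) - \varepsilon$ for all $x \in \mathcal{I}$ by definition of the supremum norm, and that $x \mapsto h(x) - \varepsilon$ is convex since $h$ is. Hence $h - \varepsilon$ is a convex minorant of $f$, and by the maximality of $g$ among all convex minorants (see Appendix \ref{app:gcm}) we obtain $h(x) - \varepsilon \le g(x)$ for every $x$. For the opposite bound, $g$ is itself a minorant of $f$, so $g(x) \le f(x) \le h(x) + \varepsilon$ for every $x$. Putting the two inequalities together yields $-\varepsilon \le g(x) - h(x) \le \varepsilon$ for all $x \in \mathcal{I}$, and taking the supremum over $x$ gives $\|g - h\|_\infty \le \varepsilon = \|f - h\|_\infty$.

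The argument is genuinely short, so I do not expect a substantive obstacle; the only point deserving a word of care is that $g$ is well-defined. Since a convex function on an interval admits an affine minorant (e.g. any of its supporting lines at an interior point of $\mathcal{I}$) and $f \ge h - \varepsilon$, the function $f$ is likewise bounded below by an affine function, which by the existence criterion recalled in Appendix \ref{app:gcm} guarantees that the greatest convex minorant of $f$ exists. This is the only place where the finiteness of $\|f-h\|_\infty$ is used beyond the main chain of inequalities.
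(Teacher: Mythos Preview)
Your proof is correct and follows essentially the same approach as the paper: set $\varepsilon = \|f-h\|_\infty$, observe that $h-\varepsilon$ is a convex minorant of $f$ so that $g \ge h-\varepsilon$ by maximality, and combine this with $g \le f \le h+\varepsilon$. Your additional remark on the existence of $g$ (via an affine minorant of $h$) is a nice touch that the paper leaves implicit.
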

\begin{proof}
	Let $\varepsilon = \|f - h\|_{\infty}$. The function $\tilde{h}(x) = h(x) - \varepsilon$ is convex and satisfies $f(x) \geq \tilde{h}(x)$ for all $x \in \mathcal{I}$ by definition of $\varepsilon$. This and the definition of $g$ imply that $f(x) \geq g(x) \geq h(x) - \varepsilon$ for all $x \in \mathcal{I}$. 
	Since also $f(x) - h(x) \leq \varepsilon$ by the definition of $\varepsilon$, this yields
	\[
	-\varepsilon \leq g(x) - h(x) \leq f(x) - h(x) \leq \epsilon,
	\]
	and so $\|g - h\|_{\infty} \leq \varepsilon = \|f - h\|_{\infty}$.
\end{proof}

\pagebreak
\section{Proofs of theoretical results} \label{app:proofs}

Some proofs rely on properties of greatest convex minorants, which are stated in Section \ref{app:gcm}.

\begin{proof}[Proof of Proposition 2.1]
	Formula (1) in the article shows that $\tilde{M}_{x_i}(y)$ is decreasing in $i$ and increasing in $y$ when the respective other argument is fixed, and
	\begin{equation} \label{eq:lowhigh}
		\tilde{M}_{x_i}(y) = 0, \ y \leq y_1, \quad \tilde{M}_{x_i}(y_m + t) = \tilde{M}_{x_i}(y_m) + t, \ t > 0. 
	\end{equation}
	In particular, it follows that the greatest convex minorant $\hat{M}_{x_i}$ of $\tilde{M}_{x_i}$ exists. For $k, j \in \{1, \dots, d\}$ with $k \leq j$, the functions $y \mapsto \sum_{s = k}^j w_s h_s(y) / (\sum_{s = k}^{j} w_s)$
	are piecewise linear with finitely many knots, a property which is preserved when taking pointwise maxima and minima of finitely many functions. Therefore, the $\tilde{M}_{x_i}$ are also piecewise linear. For any $i \in \{1, \dots, d\}$, $y \in \mathbb{R}$ and $t > 0$,
	\begin{align*}
		0 \leq \tilde{M}_{x_i}(y + t) & = \min_{k = 1, \dots, i} \max_{j = k, \dots, d}  \frac{1}{\sum_{s = k}^{j} w_s} \sum_{s = k}^j w_s h_s(y + t) \\
		& \leq \min_{k = 1, \dots, i} \max_{j = k, \dots, d}  \frac{1}{\sum_{s = k}^{j} w_s} \sum_{s = k}^j w_s [h_s(y) + t] = \tilde{M}_{x_i}(y) + t,
	\end{align*}
	so $0 \leq [\tilde{M}_{x_i}(y + t) - \tilde{M}_{x_i}(y)] / t$, and hence $\hat{M}_{x_i}(y)$ is increasing in $y$. Lemma \ref{lem:gcm} and \eqref{eq:lowhigh} together with the inequality $[\tilde{M}_{x_i}(y + t) - \tilde{M}_{x_i}(y)] / t \leq 1$ imply that $\hat{F}_{x_i} \in [0, 1]$ with $\hat{F}_{x_i}(y) = 0$ for $y < y_1$ and $\hat{F}_{x_i}(y) = 1$ for $y \geq y_m$, and $\hat{F}_{x_i}$ is continuous from the right and increasing because is is the right-hand derivative of a convex function. Finally, $\hat{M}_{x_i}(y)$ is decreasing in $i$ because $\tilde{M}_{x_i}(y)$ is pointwise decreasing in $i$ for all $y$; see Section \ref{app:gcm}.
\end{proof}

\begin{proof}[Proof of Proposition 3.1]
	The proof is similar to the proof of Corollary 1 in \citet{Duembgen2004}. With $(c_n)_{n \in \mathbb{N}}$ from (A), define $A_n = \{\sup_{y \in \mathbb{R}, \, x \in I_n} |\tilde{M}_{n; x}(y) - M_x(y)| < c_n \}$.	Then $\lim_{n \rightarrow \infty} \mathbb{P}(A_n) = 1$, and in the following derivations, assume that the inequality in $A_n$ holds. In case (i), let $v_n = c_n^{1/(1 + \beta)}$. For $x \in I_n$, by convexity of $\hat{M}_x(\cdot)$,
	\[
	\frac{\hat{M}_{n;  x}(y) - \hat{M}_{n;  x}(y - v_n)}{v_n} \leq \hat{F}_{n;  x}(y) \leq \frac{\hat{M}_{n;  x}(y + v_n) - \hat{M}_{n;  x}(y)}{v_n},
	\]
	and the same property holds for $F_x$ and $M_x$ instead of $\hat{F}_{n;  x}$ and $\hat{M}_{n;  x}$. The function $M_x(\cdot)$ is convex, so due to Lemma \ref{lem:marshall},
	\[
	\sup_{y \in \mathbb{R}}|\hat{M}_{n;x}(y) - M_x(y)| \leq \sup_{y \in \mathbb{R}}|\tilde{M}_{n;x}(y) - M_x(y)|.
	\]
	Combining these facts yields, for any $y \in J_n$,
	\begin{align*}
		\hat{F}_{n;  x}(y) & \geq \frac{\hat{M}_{n;  x}(y) - \hat{M}_{n;  x}(y-v_n)}{v_n} \\
		& \geq \frac{M_x(y) - |\hat{M}_{n;  x}(y) - M_x(y)| - M_x(y-v_n) - |\hat{M}_{n;  x}(y-v_n) - M_x(y-v_n)|}{v_n} \\
		& \geq F_x(y-v_n) - 2c_n/v_n \\
		& \geq F_x(y) - Cv_n^{\beta} -2c_n/v_n = F_x(y) - (2 + C)c_n^{\beta/(1 + \beta)},
	\end{align*}
	and similarly
	\[
	\hat{F}_{n;  x}(y) \leq \frac{\hat{M}_{n;  x}(y + v_n) - \hat{M}_{n;  x}(y)}{v_n} \leq F_x(y) + (2 + C)c_n^{\beta/(1 + \beta)}.
	\]
	Thus $|\hat{F}_{n;  x}(y) - F_x(y)| \leq (2 + C)c_n^{\beta/(1 + \beta)}$ with on $A_n$ for $x \in I_n$ and $y \in J_n$, for each $n \in \mathbb{N}$. Under (ii), for $y \in \mathbb{Z}$ and $x \in I_n$,
	\[
	\hat{F}_{n;x}(y) = \hat{M}_{n;x}(y + 1) - \hat{M}_{n; y}(y) \leq M_x(y + 1) - M_x(y) + 2 c_n = F_x(y) + 2c_n,
	\]
	and analogously $\hat{F}_{n;x}(y) \geq F_x(y) - 2c_n$, which gives $|\hat{F}_{n;x}(y) - F_x(y)| \leq 2c_n$ For $y \in \mathbb{R} \setminus \mathbb{Z}$, the same bound is valid since $F_x(y) = F_x(\lfloor y \rfloor)$ and $\hat{F}_{n; x}(y) = \hat{F}_{n; x}(\lfloor y \rfloor)$, where the latter holds if $\tilde{M}_{n;x}(y)$ and $\hat{M}_{n;x}(y)$ are only computed at $y \in \mathbb{Z}$ and interpolated linearly.
\end{proof}

The proof of Theorem 3.2 requires several auxiliary results.

\begin{prop} \label{prop:concentration}
	Let $Z_1, \dots, Z_k$ be random variables in a non-degenerate interval $[a,b] \subset \mathbb{R}$. Then there exists a universal constant $M \leq 2^{5/2}e$ such that for all $\varepsilon > 0$,
	\[
	\mathbb{P}\left(\sup_{z \in \mathbb{R}}\frac{1}{\sqrt{k}}\Big\vert \sum_{i = 1}^k (z - Z_i)_+ - \mathbb{E}[(z - Z_i)_+]  \Big\vert \geq \varepsilon \right) \leq M\exp\left(\frac{-2\varepsilon^2}{(b-a)^2} \right)
	\]
\end{prop}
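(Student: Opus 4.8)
The plan is to reduce the supremum over the infinite index set $\{z\in\mathbb{R}\}$ to a single explicit integral of an empirical process, after which only a one-parameter Chernoff bound is needed. First I would use the elementary identity $(z - Z_i)_+ = \int_a^z \one\{Z_i \le t\}\, dt$, valid because $Z_i \in [a,b]$, together with $\mathbb{E}[(z-Z_i)_+] = \int_a^z F_i(t)\, dt$, where $F_i$ is the distribution function of $Z_i$. Summing over $i$ and normalizing gives, for every $z \in \mathbb{R}$,
\[
\frac{1}{\sqrt{k}}\sum_{i=1}^k\big[(z-Z_i)_+ - \mathbb{E}(z-Z_i)_+\big] \ = \ \int_a^z B_k(t)\, dt, \qquad B_k(t) \ := \ \frac{1}{\sqrt k}\sum_{i=1}^k\big(\one\{Z_i\le t\} - F_i(t)\big).
\]
Since $B_k(t) \equiv 0$ for $t \notin [a,b]$ almost surely, the right-hand side is constant (equal to $0$) for $z \le a$ and constant for $z \ge b$, so that $\sup_{z\in\mathbb{R}}\big|\int_a^z B_k(t)\, dt\big| \le \int_a^b |B_k(t)|\, dt =: X$, and the troublesome supremum over $z$ has disappeared.

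Next I would control the moment generating function of $X$. Writing $X$ as $(b-a)$ times the average of $|B_k(t)|$ over $t \in [a,b]$ and applying Jensen's inequality to the convex function $u \mapsto e^{(b-a)\mu u}$ gives, for any $\mu > 0$,
\[
\mathbb{E}\big[e^{\mu X}\big] \ \le \ \frac{1}{b-a}\int_a^b \mathbb{E}\big[e^{\mu(b-a)|B_k(t)|}\big]\, dt .
\]
For each fixed $t$, $B_k(t)$ is a normalized sum of independent (here independence of $Z_1,\dots,Z_k$ enters), mean-zero random variables whose summands $\one\{Z_i\le t\} - F_i(t)$ lie in an interval of length one, so Hoeffding's lemma yields $\mathbb{E}[e^{sB_k(t)}] \le e^{s^2/8}$ and hence $\mathbb{E}[e^{s|B_k(t)|}] \le 2e^{s^2/8}$ for all $s \in \mathbb{R}$. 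Taking $s = \mu(b-a)$ gives $\mathbb{E}[e^{\mu X}] \le 2\exp(\mu^2(b-a)^2/8)$, uniformly in $k$ and in the distributions of the $Z_i$.

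Finally, combining the two steps with a Chernoff bound,
\[
\mathbb{P}\!\left(\sup_{z\in\mathbb{R}}\frac{1}{\sqrt k}\left|\sum_{i=1}^k \big[(z-Z_i)_+ - \mathbb{E}(z-Z_i)_+\big]\right| \ge \varepsilon\right) \le \mathbb{P}(X \ge \varepsilon) \le e^{-\mu\varepsilon}\,\mathbb{E}\big[e^{\mu X}\big] \le 2\exp\!\left(\frac{\mu^2(b-a)^2}{8} - \mu\varepsilon\right),
\]
and the choice $\mu = 4\varepsilon/(b-a)^2$ minimizes the exponent and yields exactly $2\exp\!\big(-2\varepsilon^2/(b-a)^2\big)$. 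This is the claimed bound with the universal constant $M = 2$, which is comfortably below $2^{5/2}e$.

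The only genuinely delicate point is the first step: spotting the integral representation and checking that $B_k$ vanishes outside $[a,b]$, so that $\sup_{z\in\mathbb{R}}$ collapses to $\int_a^b|B_k|$ rather than requiring a covering or chaining argument over $z$. Everything afterwards is the standard Hoeffding--Chernoff computation, and it is precisely the Jensen step that keeps the final constant a genuine universal number; a cruder treatment of the supremum (for instance via a fine grid and a union bound) is the kind of detour that would inflate the constant to something like $2^{5/2}e$.
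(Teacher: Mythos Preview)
Your proof is correct, and after the shared first step it takes a genuinely different route from the paper. Both arguments start from the integral representation $(z-Z_i)_+ = \int_a^z \one\{Z_i\le t\}\,dt$ and the resulting identity $\frac{1}{\sqrt{k}}\sum_i[(z-Z_i)_+ - \mathbb{E}(z-Z_i)_+] = \int_a^z B_k(t)\,dt$. From there the paper bounds crudely by $(b-a)\sup_{u\in\mathbb{R}}|B_k(u)|$ and then invokes a Dvoretzky--Kiefer--Wolfowitz-type inequality for independent, non-identically distributed variables (Theorem~4.6 of \citet{Moesching2020}), which is where the constant $2^{5/2}e$ comes from. You instead bound by the $L^1$-norm $\int_a^b |B_k(t)|\,dt$ and control its moment generating function via Jensen's inequality and pointwise Hoeffding, which sidesteps the need for any uniform-in-$t$ empirical process result and delivers the sharper constant $M=2$. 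Your approach is more self-contained and yields a better constant; the paper's approach is shorter because it outsources the work to a cited theorem. Both arguments require independence of $Z_1,\dots,Z_k$, which the proposition statement omits but which is clearly intended (and which you correctly flag).
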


\begin{proof}
	Let $F_i$ be the cumulative distribution function of $Z_i$. The assumption $F_i(z) = 0$ for $s < a$ implies that $\mathbb{E}[(z - Z_i)_+] = \int_{a}^z F_i(z) \, ds$, so
	\begin{align*}
		\frac{1}{\sqrt{k}} \Big\vert \sum_{i = 1}^k (z - Z_i)_+ - \mathbb{E}[(z - Z_i)_+] \Big\vert&= \frac{1}{\sqrt{k}} \Big\vert\sum_{i = 1}^k \int_{a}^z \one\{Z_i \leq s\} - F_i(s) \, ds\Big\vert \\
		& \leq \frac{1}{\sqrt{k}} \int_{a}^z \Big\vert\sum_{i = 1}^k \one\{Z_i \leq s\} - F_i(s)\Big\vert \, ds \\
		& \leq \frac{1}{\sqrt{k}} \int_{a}^z \sup_{u \in \mathbb{R}} \Big\vert \sum_{i = 1}^k \one\{Z_i \leq u\} - F_i(u)\Big\vert \, ds \\
		& = \frac{1}{\sqrt{k}} (b-a) \sup_{u \in \mathbb{R}} \Big\vert \sum_{i = 1}^k \one\{Z_i \leq u\} - F_i(u)\Big\vert.
	\end{align*}
	Theorem 4.6 of \cite{Moesching2020} now yields the result.
\end{proof}

For $\gamma > 0$ and $z \in \mathbb{R}$, let $t_{\gamma}(z) = \min(\max(-\gamma, z), \gamma)$. The following inequality, which follows by simple case distinctions, will be applied several times: For all $y, z \in \mathbb{R}$,
\begin{equation} \label{eq:tuncation}
	|(y - z)_+ - (y - t_{\gamma}(z))_+| \leq (\gamma+z)_- + (z-\gamma)_+,
\end{equation}
where $(x)_- = \max(0, -x)$ and $(x)_+ = \max(0, x)$ for $x \in \mathbb{R}$.

\begin{lem} \label{lem:tail}
	Let $Z$ be a random variable such that for some $z_0 > 0$ and all $z \geq z_0$,
	\[
	\mathbb{P}(|Z| \geq z) \leq \begin{cases}
		z^{-\lambda}, \ \text{ for some } \lambda > 1, \text{ or} \\
		\exp(-\lambda z), \ \text{ for some } \lambda > 0.
	\end{cases}
	\]
	Then for $\gamma \geq z_0$,
	\[
	\mathbb{E}\left(\sup_{z \in \mathbb{R}} |(z-Z)_+ - (z-t_{\gamma}(Z))_+|\right) \leq \begin{cases}
		\gamma^{-\lambda + 1 }/(\lambda - 1), \ \text{ or } \\
		\exp(-\lambda \gamma)/\lambda.
	\end{cases}
	\]
\end{lem}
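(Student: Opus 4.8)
The plan is to eliminate the supremum over $z$ using the pointwise bound \eqref{eq:tuncation}, and then compute the remaining expectation as a tail integral. The key observation is that the right-hand side of \eqref{eq:tuncation} does not depend on the free variable $y$. So, taking $y$ there to be the supremum variable $z$ and taking the truncated argument there to be the realization of $Z$, \eqref{eq:tuncation} gives, for every realization of $Z$,
\[
\sup_{z \in \mathbb{R}} \bigl| (z - Z)_+ - (z - t_{\gamma}(Z))_+ \bigr| \leq (\gamma + Z)_- + (Z - \gamma)_+ .
\]
Taking expectations reduces the task to bounding $\mathbb{E}[(\gamma + Z)_-] + \mathbb{E}[(Z - \gamma)_+]$.

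I would then simplify this sum. Since $\gamma > 0$, the events $\{Z < -\gamma\}$ and $\{Z > \gamma\}$ are disjoint; on the former $(\gamma + Z)_- = |Z| - \gamma$ while $(Z - \gamma)_+ = 0$, on the latter the two roles are exchanged, and outside both events both terms vanish. Hence the sum equals $\mathbb{E}[(|Z| - \gamma)_+]$, and by the layer-cake formula together with $\mathbb{P}(|Z| > s) \leq \mathbb{P}(|Z| \geq s)$,
\[
\mathbb{E}[(|Z| - \gamma)_+] = \int_0^{\infty} \mathbb{P}(|Z| > \gamma + t) \, dt = \int_{\gamma}^{\infty} \mathbb{P}(|Z| > s) \, ds .
\]

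Finally, because $\gamma \geq z_0$ the assumed tail bound is valid on all of $[\gamma, \infty)$, so I can replace $\mathbb{P}(|Z| > s)$ by $s^{-\lambda}$, resp.\ $\exp(-\lambda s)$, and integrate: $\int_{\gamma}^{\infty} s^{-\lambda} \, ds = \gamma^{1-\lambda}/(\lambda - 1)$ and $\int_{\gamma}^{\infty} \exp(-\lambda s) \, ds = \exp(-\lambda \gamma)/\lambda$, which are exactly the two claimed estimates. There is no genuine obstacle in this argument. The only points that deserve a moment's attention are that the right-hand side of \eqref{eq:tuncation} is independent of the supremum variable --- this is what makes the reduction possible --- and that $\lambda > 1$ in the polynomial case is precisely the condition ensuring the tail integral converges, while $\gamma \geq z_0$ is what allows using the tail bound throughout the range of integration.
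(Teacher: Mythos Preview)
Your proof is correct and follows essentially the same route as the paper: both apply \eqref{eq:tuncation} to bound the supremum by $(\gamma+Z)_- + (Z-\gamma)_+$, reduce its expectation to the tail integral $\int_{\gamma}^{\infty} \mathbb{P}(|Z| \geq s)\,ds$, and then plug in the assumed tail bounds. Your intermediate step of recognizing $(\gamma+Z)_- + (Z-\gamma)_+ = (|Z|-\gamma)_+$ is a slight streamlining over the paper's version, which writes each expectation separately via the CDF before combining, but the argument is the same.
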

\begin{proof}
	Replacing $z$ by the random variable $Z$ in \eqref{eq:tuncation} implies that for all $\gamma \geq 0$,
	\[
	\mathbb{E}\Big( \sup_{z \in \mathbb{R}} |(z - Z)_+ - (z  - t_{\gamma}(Z))_+|\Big) \leq \mathbb{E}[(\gamma+Z)_- + (Z - \gamma)_+].
	\]
	To compute the expected value in the upper bound, let $F$ denote the cumulative distribution function of $Z$. Then,
	\[
	\mathbb{E}[(\gamma+Z)_-] = \int_{-\infty}^{-\gamma} F(z) \, ds, \quad \mathbb{E}[(Z - \gamma)_+] = \int_{\gamma}^{\infty} 1 - F(z) \, ds.
	\]
	This implies
	\[
	\mathbb{E}\Big( \sup_{z \in \mathbb{R}} |(z - Z)_+ - (z  - t_{\gamma}(Z))_+|\Big) \leq \int_{\gamma}^{\infty} F(-s) + (1 - F(s)) \, ds = \int_{\gamma}^{\infty} \mathbb{P}(|Z| \geq s) \, ds.
	\]
	In the first case, for $\gamma \geq z_0$, it holds $\int_{\gamma}^{\infty} \mathbb{P}(|Z| \geq s) \, ds \leq \gamma^{-\lambda + 1 }/(\lambda - 1)$. In the second case, the upper bound is $\exp(-\lambda \gamma)/\lambda$.
\end{proof}

Proposition \ref{prop:concentration} and Lemma \ref{lem:tail} allow to derive an analogous result to Corollary 4.7 of \citet{Moesching2020}, for which some additional notation is required. For $y \in \mathbb{R}$ and $r, s \in \{1, \dots, n\}$, $r \leq s$, define $w_{rs} = s - r + 1$ and
\[
\mathbb{M}_{rs}(y) = \frac{1}{w_{rs}}\sum_{i = r}^s (y - Y_{ni})_+, \quad \bar{M}_{rs}(y) = \frac{1}{w_{rs}}  \sum_{i = r}^s \mathbb{E}[(y - Y_{ni})_+].
\]
Recall that the estimator $\tilde{M}_{n;x_i}$ has the representation
\[
\tilde{M}_{n;x_i}(y) = \min_{k = 1, \dots, i} \max_{j = k, \dots, d}  \frac{1}{\sum_{s = k}^{j} w_s} \sum_{s = k}^j w_s h_s(y),
\]
for the distinct values $x_1 < \dots < x_d$ of $X_{n1}, \dots, X_{nn}$, $w_i = \#\{j \leq n: \, X_{nj} = x_i\}$, and
\[
h_i(y) = \frac{1}{w_i}\sum_{j: \, X_j = x_i} (y - Y_{nj})_+, \ i = 1, \dots, d.
\]
For fixed $i \in \{1, \dots, d\}$, let $1 \leq r(i) \leq s(i) \leq d$ be indices such that
\[
\tilde{M}_{n;x_i}(y) = \frac{1}{\sum_{k = r(i)}^{s(i)} w_k} \sum_{k = r(i)}^{s(i)} w_k h_k(y).
\]
Assuming $X_{n1} \leq \dots \leq X_{nn}$, with $\tilde{r}(x) = \min\{j \leq n: \, X_{nj} = x_{r(i)}\}$, $\tilde{s}(x) = \max\{j \leq n: \, X_{nj} = x_{r(i)}\}$, the estimator $\tilde{M}_{n;x_i}(y)$ equals
\[
\tilde{M}_{n;x_i}(y) = \frac{1}{\tilde{s}(i) - \tilde{r}(i) + 1} \sum_{k = \tilde{r}(i)}^{\tilde{s}(i)} (y - Y_{nk})_+.
\]
This implies that
\[
\max_{1 \leq r \leq s \leq d}\left\| \frac{1}{\sum_{k = r}^{s} w_k} \sum_{k = r}^{s} w_k (h_k- M_{x_k})  \right\|_{\infty} \leq \max_{1 \leq r \leq s \leq n}\|\mathbb{M}_{rs} - \bar{M}_{rs} \|_{\infty},
\]
and an asymptotic upper bound for $\max_{1 \leq r \leq s \leq n}\|\mathbb{M}_{rs} - \bar{M}_{rs} \|_{\infty}$ is derived below.

\begin{prop} \label{prop:bound}
	Let $R_n = \max_{1 \leq r \leq s \leq n} w_{rs}^{1/2} \|M_{rs} - \bar{M}_{rs} \|_\infty$. Then for any $D > 2$,
	\[
	\lim_{n \rightarrow \infty} \mathbb{P}\left(R_n \leq D\log(n)^{1/2}\gamma_n \right) = 1,
	\]
	where
	\[
	\gamma_n = \begin{cases}
		(n\log(n))^{1/\lambda}, & \ \text{under (P)}, \\
		2\log(n)/\lambda, & \ \text{under (E)}.
	\end{cases}
	\]
\end{prop}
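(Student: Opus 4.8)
The plan is to pass from the unbounded summands $(y - Y_{ni})_+$ to bounded ones by truncation, apply the Gaussian-type deviation bound of Proposition~\ref{prop:concentration} to the truncated process separately for each of the at most $n^2$ index pairs $(r,s)$, and estimate the truncation error by the tail assumptions (P) and (E) via Lemma~\ref{lem:tail}. Throughout, all probabilities are computed conditionally on $X_{n1}, \dots, X_{nn}$, which (after relabelling) may be assumed increasingly ordered; under this conditioning the $Y_{ni}$ are independent and each satisfies the relevant tail bound because $X_{ni} \in I$. Since the resulting bounds do not depend on the realisation of the covariates, the unconditional statement follows immediately.

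First I would fix $D > 2$ and write $D = D' + D''$ with $D' > 2$ and $D'' > 0$. For $1 \le r \le s \le n$ set $\mathbb{M}^{t}_{rs}(y) = w_{rs}^{-1} \sum_{i=r}^{s} (y - t_{\gamma_n}(Y_{ni}))_+$ and $\bar{M}^{t}_{rs}(y) = w_{rs}^{-1} \sum_{i=r}^{s} \mathbb{E}[(y - t_{\gamma_n}(Y_{ni}))_+]$, with $t_{\gamma_n}$ as in \eqref{eq:tuncation}. The triangle inequality then gives $R_n \le B_n + T_n + \bar{B}_n$, where
\begin{align*}
T_n &= \max_{1 \le r \le s \le n} w_{rs}^{1/2} \|\mathbb{M}^{t}_{rs} - \bar{M}^{t}_{rs}\|_\infty, \\
B_n &= \max_{1 \le r \le s \le n} w_{rs}^{1/2} \|\mathbb{M}_{rs} - \mathbb{M}^{t}_{rs}\|_\infty, \qquad
\bar{B}_n = \max_{1 \le r \le s \le n} w_{rs}^{1/2} \|\bar{M}^{t}_{rs} - \bar{M}_{rs}\|_\infty.
\end{align*}
For the fluctuation term $T_n$, I would note that $t_{\gamma_n}(Y_{ni}) \in [-\gamma_n, \gamma_n]$, so Proposition~\ref{prop:concentration} with $k = w_{rs}$ and $b - a = 2\gamma_n$ gives $\mathbb{P}(w_{rs}^{1/2} \|\mathbb{M}^{t}_{rs} - \bar{M}^{t}_{rs}\|_\infty \ge \varepsilon) \le M \exp(-\varepsilon^2 / (2\gamma_n^2))$, a bound uniform in $(r,s)$ because the factor $w_{rs}^{1/2}$ cancels the $\sqrt{k}$ normalisation. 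A union bound over the at most $n^2$ pairs, combined with the choice $\varepsilon = D' \log(n)^{1/2} \gamma_n$, then yields $\mathbb{P}(T_n \ge D' \log(n)^{1/2} \gamma_n) \le M\, n^{2 - (D')^2/2} \to 0$, since $D' > 2$.

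Next I would show that $B_n + \bar{B}_n$ is negligible compared with $\log(n)^{1/2} \gamma_n$. By \eqref{eq:tuncation}, $\sup_z |(z - Y_{ni})_+ - (z - t_{\gamma_n}(Y_{ni}))_+| \le U_{ni} := (\gamma_n + Y_{ni})_- + (Y_{ni} - \gamma_n)_+$, and since the $U_{ni}$ are nonnegative and $w_{rs} \ge 1$ this gives $B_n \le \sum_{i=1}^n U_{ni}$ and $\bar{B}_n \le n^{1/2} \max_i \mathbb{E}[U_{ni}]$. The integral computation in the proof of Lemma~\ref{lem:tail} shows $\mathbb{E}[U_{ni}] \le \int_{\gamma_n}^\infty \mathbb{P}(|Y_{ni}| \ge s) \, ds$, which is at most $\gamma_n^{1-\lambda}/(\lambda - 1)$ under (P) and $\exp(-\lambda \gamma_n)/\lambda$ under (E) once $\gamma_n \ge y_0$. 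Substituting $\gamma_n = (n \log n)^{1/\lambda}$, respectively $\gamma_n = 2\log(n)/\lambda$, one checks directly that $n^{1/2} \max_i \mathbb{E}[U_{ni}] \to 0$, hence $\bar{B}_n \to 0$, and that $n \max_i \mathbb{E}[U_{ni}] = o(\log(n)^{1/2} \gamma_n)$; Markov's inequality then gives $\mathbb{P}(B_n \ge \tfrac12 D'' \log(n)^{1/2} \gamma_n) \to 0$. For $n$ large enough $\bar{B}_n < \tfrac12 D'' \log(n)^{1/2} \gamma_n$ holds deterministically, so on the event $\{T_n < D' \log(n)^{1/2} \gamma_n\} \cap \{B_n < \tfrac12 D'' \log(n)^{1/2} \gamma_n\}$, whose probability tends to one, one obtains $R_n \le B_n + T_n + \bar{B}_n < (D' + D'') \log(n)^{1/2} \gamma_n = D \log(n)^{1/2} \gamma_n$, which is the claim.

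The main obstacle is the calibration of the truncation level $\gamma_n$: the deviation bound of Proposition~\ref{prop:concentration} deteriorates quadratically in the width $2\gamma_n$ of the truncation interval, whereas the truncation bias decreases only polynomially (under (P), where $\lambda > 2$) or exponentially (under (E)) in $\gamma_n$, and the stated values of $\gamma_n$ are precisely those for which the bias is negligible relative to $\log(n)^{1/2} \gamma_n$ while the union bound over the $\mathcal{O}(n^2)$ pairs costs only a $\log n$ factor, leaving the harmless constraint $D > 2$. Everything else is routine manipulation of the tail integrals together with the inequality \eqref{eq:tuncation}.
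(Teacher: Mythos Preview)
Your proof is correct and follows essentially the same strategy as the paper: truncate at level $\gamma_n$, apply Proposition~\ref{prop:concentration} to the truncated process, use a union bound over the $\mathcal{O}(n^2)$ pairs $(r,s)$, and control the truncation bias via Lemma~\ref{lem:tail}. The calibration of $\gamma_n$ and the resulting constraint $D>2$ are identical.

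The one technical difference is in how the \emph{empirical} truncation error $B_n$ is handled. You bound $B_n \le \sum_{i=1}^n U_{ni}$ and apply Markov's inequality, showing $n\,\mathbb{E}[U_{ni}] = o(\log(n)^{1/2}\gamma_n)$. The paper instead works on the event $\{\max_i |Y_{ni}| \le \gamma_n\}$, on which truncation leaves the data unchanged so that $B_n = 0$ exactly; the complement has probability at most $n\,p(\gamma_n) \to 0$ for the stated $\gamma_n$. This collapses your three-term decomposition $R_n \le B_n + T_n + \bar{B}_n$ to the two-term bound $R_n \le T_n + n^{1/2}u(\gamma_n)$ on a high-probability event. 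Both arguments are valid; the paper's version is marginally cleaner (no Markov step, no need to split $D = D' + D''$), while yours would still go through under weaker tail assumptions where $\mathbb{P}(|Y_{ni}| > \gamma_n)$ is not summable but $\mathbb{E}[U_{ni}]$ is still small enough.
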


\begin{proof}
	For $\gamma > 0$, define
	\[
	u(\gamma) = \begin{cases}
		\gamma^{-\lambda+1} / (\lambda-1), & \ \text{under (P)}, \\
		\exp(-\lambda \gamma)/\lambda, & \ \text{under (E),}
	\end{cases} \quad
	p(\gamma) = \begin{cases}
		\gamma^{-\lambda}, & \ \text{under (P)}, \\
		\exp(-\lambda \gamma), & \ \text{under (E).}
	\end{cases}
	\]
	By Lemma \ref{lem:tail}, for any $y \in \mathbb{R}$ and $\gamma \geq y_0$,
	\[
	\frac{1}{w_{rs}}\Big\vert \sum_{i = r}^s \mathbb{E}[(y - Y_{ni})_+] - \mathbb{E}[(y - t_{\gamma}(Y_{ni}))_+] \Big\vert \leq u(\gamma).
	\]
	Also by (P) or (E) and by \eqref{eq:tuncation},
	\[
	\mathbb{P}\left(\sup_{y \in \mathbb{R}}|(y - Y_{ni})_+ - (y - t_{\gamma}(Y_{ni}))_+| > 0 \right) \leq \mathbb{P}(|Y_{ni}| \geq \gamma) \leq p(\gamma).
	\]
	This implies that the events
	\[
	B_{n} = \left\{\sup_{y \in \mathbb{R}, i = 1, \dots, n}|(y - Y_{ni})_+ - (y - t_{\gamma}(Y_{ni}))_+| = 0 \right\}
	\]
	satisfy $\mathbb{P}(B_{n}) \geq 1 - n p(\gamma)$. Let $\prescript{}{\gamma}{\mathbb{M}_{rs}}$ and $\prescript{}{\gamma}{\bar{M}_{rs}}$ be defined as $\mathbb{M}_{rs}$ and $\bar{M}_{rs}$ but with the truncated variables $t_{\gamma}(Y_{ni})$ instead of $Y_{ni}$. By the above considerations, conditional on $B_{n}$, for any $1 \leq r \leq s \leq n$,
	\begin{align*}
		\|\mathbb{M}_{rs} - \bar{M}_{rs}\|_{\infty} = \sup_{y \in \mathbb{R}} \frac{1}{w_{rs}}\Big|\sum_{i = r}^s (y - Y_{ni})_+ - \mathbb{E}[(y - Y_{ni})_+] \Big| 
		\leq \|\prescript{}{\gamma}{\mathbb{M}_{rs}} - \prescript{}{\gamma}{\bar{M}_{rs}}\|_{\infty} + u(\gamma)
	\end{align*}
	Proposition \ref{prop:concentration} implies that
	\[
	\mathbb{P}\left(\sup_{y \in \mathbb{R}} w_{rs}^{1/2}|\prescript{}{\gamma}{\mathbb{M}_{rs}(y)} - \prescript{}{\gamma}{\bar{M}_{rs}}(y)| \geq \varepsilon \right) \leq M \exp \left(\frac{-2\varepsilon^2}{(2\gamma)^2} \right).
	\]
	Replace now $\gamma$ by 
	\[
	\gamma_n = \begin{cases}
		[n\log(n)]^{1/\lambda}, & \ \text{under (P)}, \\
		2\log(n)/\lambda, & \ \text{under (E)}.
	\end{cases}
	\]
	This yields
	\[
	n\cdot p(\gamma_n) = \begin{cases}
		n[n\log(n)]^{-\lambda/\lambda} = \log(n)^{-1}, \\
		n\exp(-2\lambda\log(n)/\lambda) = n^{-1},
	\end{cases}
	\]
	and therefore $\lim_{n \rightarrow \infty} \mathbb{P}(B_n) = 1$. Also,
	\[
	n^{1/2} \cdot u(\gamma_n) = \begin{cases}
		n^{1/2}[n\log(n)]^{(1-\lambda)/\lambda} / (\lambda-1) = n^{-1/2 + 1/\lambda}\log(n)^{1/\lambda - 1}/(\lambda-1), \\
		n^{1/2} \exp(-2\lambda\log(n)/\lambda)/\lambda = n^{-3/2}/\lambda,
	\end{cases}
	\]
	which gives $\lim_{n \rightarrow \infty}n^{1/2} \cdot u_n = 0$, using $\lambda > 2$ in the first case. For $\delta > 0$, define $\varepsilon_n = 2(1 + \delta)\log(n)^{1/2}\gamma_n$. Then, for $n$ large enough such that $n^{1/2} u(\gamma_n) \leq \delta\log(n)^{1/2}\gamma_n$, and by conditioning on $B_n$,
	\begin{align*}
		\mathbb{P}(R_n \geq \varepsilon_n) & \leq \sum_{1\leq r \leq s \leq n} \mathbb{P}(w_{rs}^{1/2}\|\mathbb{M}_{rs} - \bar{M}_{rs}\|_{\infty} \geq \varepsilon_n) \\
		& \leq \sum_{1\leq r \leq s \leq n} \mathbb{P}\left(w_{rs}^{1/2}\|\prescript{}{\gamma_n}{\mathbb{M}_{rs}} - \prescript{}{\gamma_n}{\bar{M}_{rs}}\|_{\infty} +  w_{rs}^{1/2}u(\gamma_n)\geq \varepsilon_n \right) \\
		& \leq \sum_{1\leq r \leq s \leq n} \mathbb{P}\left(w_{rs}^{1/2}\|\prescript{}{\gamma_n}{\mathbb{M}_{rs}} - \prescript{}{\gamma_n}{\bar{M}_{rs}}\|_{\infty} + n^{1/2} u(\gamma_n) \geq \varepsilon_n \right) \\
		& \leq \sum_{1\leq r \leq s \leq n} \mathbb{P}\left(w_{rs}^{1/2}\|\prescript{}{\gamma_n}{\mathbb{M}_{rs}} - \prescript{}{\gamma_n}{\bar{M}_{rs}}\|_{\infty} \geq  2(1 + \delta/2)\log(n)^{1/2}\gamma_n \right) \\
		& \leq \frac{Mn(n+1)}{2}\exp\left(- \frac{8(1 + \delta/2)^2 \log(n)\gamma_n^2}{(2\gamma_n)^2} \right) \\
		& \leq \frac{M}{2} \exp(2\log(n + 1) - 2(1 + \delta/2)^2 \log(n)) \rightarrow 0, \ n \rightarrow \infty. \qedhere
	\end{align*}
\end{proof}

\begin{proof}[Proof of Theorem 3.2, discrete setting (K)]
	For $j = 1, \dots, K$, let $A_j = \{i \in \{1, \dots, n\}: \, X_{ni} = j\}$, and define $\check{M}_{n; j} = \sum_{i \in A_i } (y - Y_{ni})_+ / \#A_i$. Recall that $\tilde{M}_{n;j}(y)$ is the antitonic regression of $(X_{ni}, (y-Y_{ni})_+)$, $i = 1, \dots, n$. Corollary B of \citet[p. 42]{Robertson1988} implies that for all $y \in \mathbb{R}$,
	\[
	\max_{j = 1, \dots, K}|M_{j}(y) - \tilde{M}_{n, j}(y)| \leq \max_{j = 1, \dots, K}|M_{j}(y) - \check{M}_{n, j}(y)|
	\]
	This gives
	\[
	\max_{j = 1, \dots, K}\|M_{j} - \tilde{M}_{n, j}\|_{\infty} \leq \max_{j = 1, \dots, K}\|M_{j} - \check{M}_{n, j}\|_{\infty}.
	\]
	Assume that $X_{n1} \leq \dots \leq X_{nn}$, and define $k(j) = \max\{k \in \{1, \dots, n\}: \, X_{nk} = j\}$ for $j = 1, \dots, K$, and $k(0) = 0$. Then $\#A_j = k(j) - k(j - 1)$, and by assumption (K),
	\[
	\min_{j = 1, \dots, K} \frac{k(j) - k(j - 1)}{n} = \frac{\#A_j}{n} \geq p / 2.
	\]
	with asymptotic probability one. Since $\check{M}_{n; j}(y) = \mathbb{M}_{(k(j-1) + 1),k(j)}(y)$ and $w_{(k(j-1) + 1),k(j)} = \#A_j$, Proposition \ref{prop:bound} implies that, with asymptotic probability one for any $D > 2$ and $j = 1, \dots, K$,
	\[
	\|\check{M}_{n; j} - M_j\|_{\infty} \leq (w_{(k(j-1) + 1),k(j)})^{-1/2} R_n \leq \left(\frac{np}{2}\right)^{-1/2}R_n \leq D\gamma_n\left(\frac{2}{p}\right)^{1/2}\left(\frac{\log(n)}{n}\right)^{1/2}.
	\]
	With $D = \sqrt{8} > 2$, the upper bound equals
	\[
	c_n = \begin{dcases}
		4p^{-1/2}n^{-1/2 + 1/\lambda}\log(n)^{1/2 + 1/\lambda}, & \ \text{under (P)}, \\
		8p^{-1/2}\lambda^{-1}n^{-1/2}\log(n)^{3/2}, & \ \text{under (E)}. \qedhere
	\end{dcases}
	\]
\end{proof}

\begin{proof}[Proof of Theorem 3.2, continuous setting (C1), (C2)]
	With Proposition \ref{prop:bound}, one can apply the same strategy of proof as for Theorem 3.3 in \cite{Moesching2020}. Let $\delta_n$ be a sequence such that $\lim_{n \rightarrow \infty} \delta_n = 0$ and $\lim_{n \rightarrow \infty} n\delta_n/\log(n) = \infty$. By assumption (C1) and by the result in Section 4.3 of \citet{Moesching2020}, for all subintervals $\mathcal{I} \subseteq I$ of length at least $\delta_n$ and any $q \in (0, p)$, the inequality $\{i \leq n: X_{ni} \in \mathcal{I}\} \geq qn\delta_n$ holds with asymptotic probability one. Let $x \in I$ such that $x - \delta_n \in I$, and define
	\[
	r(x) = \min\{i \leq n: \, X_{ni} \geq x - \delta_n\}, \ j(x) = \max\{i \leq n: \, X_{ni} \leq x\}.
	\]
	By the above considerations, with asymptotic probability one, $r(x)$ and $j(x)$ are well-defined, satisfy $r(x) \leq j(x)$, $x - \delta_n \leq X_{nr(x)} \leq X_{nj(x)} \leq x$, and $\#\{j \leq n: \, X_{nj} \in [x - \delta_n, x]\} \geq qn\delta_n$. Therefore, for any $y \in \mathbb{R}$,
	\begin{align}
		\tilde{M}_{n; x}(y) - M_x(y) & \leq \tilde{M}_{n; x_{j(x)}}(y) - M_x(y) \nonumber \\
		& = \min_{k = 1, \dots, i} \max_{j = k, \dots, d}  \frac{1}{\sum_{s = k}^{j} w_s} \sum_{s = k}^j w_s h_s(y) - M_x(y) \nonumber \\
		& \leq \max_{n \geq s \geq j(x)} \mathbb{M}_{r(x)s}(y) - M_x(y) \nonumber \\
		& \leq (qn\delta_n)^{-1/2} R_n + \max_{n \geq s \geq j(x)} \bar{M}_{r(x)s}(y) - M_x(y) \nonumber \\
		& \leq (qn\delta_n)^{-1/2} R_n + M_{x_{r(x)}}(y) - M_x(y) \label{eq:upper_bnd_error} \\
		& \leq (qn\delta_n)^{-1/2} R_n + L\delta_n, \label{eq:upper_bnd_error2}
	\end{align}
	using antitonicity of $t \mapsto \tilde{M}_t(y)$ in the first line, equation (1) from the article in the second line, and antitonicity of $t \mapsto M_t(y)$ in the second-last step. An analogous argument for $M_x(y) - \tilde{M}_{n; x}(y)$ and the asymptotic bound for $R_n$ in Proposition \ref{prop:bound} yield
	\[
	|\tilde{M}_{n; x}(y) - M_{n, x}(y)| \leq (qn\delta_n)^{-1/2} \cdot D\log(n)^{1/2}\gamma_n + L\delta_n.
	\]
	for $D > 2$. The convergence rates of these two summands are balanced if $\delta_n = ({\log(n)}/{n})^{1/3} \gamma_n^{2/3}$, and for $D = \sqrt{8}$ and $q = p/2$, the upper bound equals
	\[
	c_n = \begin{cases}
		[4p^{-1/2} + L] n^{-1/3 + 2/(3\lambda)}\log(n)^{1/3 + 2/(3\lambda)}, & \ \text{under (P)}, \\
		(2/\lambda)^{2/3}[4p^{-1/2} + L]n^{-1/3}\log(n), & \ \text{under (E)}.
	\end{cases}
	\]
	If $M_x(y)$ is constant in $x \in \mathcal{I}$ for all $y$, then the difference $M_{x_{r(x)}}(y) - M_x(y)$ in \eqref{eq:upper_bnd_error} equals zero, and the term $L\delta_n$ in \eqref{eq:upper_bnd_error2} disappears. In that case, one can set $\delta_n = \log(n)^{-1}$, which again with $D = \sqrt{8}$ and $q = p/2$ yields the upper bound
	\[
	Dq^{-1/2}n^{-1/2}\log(n)^{1/2}\gamma_n = 8p^{-1/2}\lambda^{-1}\log(n)^{2} n^{-1/2}
	\]
	under (E), which is valid for all $x$ such that $x \pm \log(n)^{-1} \in I$.
	
\end{proof}

\section{Convergence rates with interpolation}

In Section 2 in the manuscript, it is suggested to estimate $\tilde{M}_x(y)$ and $\hat{M}_x(y)$ only on a finite grid $t_1, \dots, t_k$. Below is a proof that this indeed does not influence the convergence rates, provided that $t_1 = y_1$, $t_k = y_m$, and that the grid is fine enough.

\begin{proof}[Proof that convergence rates are valid under interpolation]
	Assume that (A) holds, i.e.~
	\[
	\lim_{n \rightarrow \infty} \mathbb{P} \left(\sup_{y \in J_n, x \in I_n} |\tilde{M}_{n; x}(y) - M_{x}(y)| \geq c_n \right) = 0
	\]
	for some sequences of sets $I_n, J_n \subseteq \mathbb{R}$. Let $\tilde{m}_{n;x}$ be the linear interpolation of $\tilde{M}_{n;x}$ computed on this grid. That is, for $y \in (t_i, t_{i + 1}]$, set $\tilde{m}_{n;x}(y) = \lambda \tilde{M}_x(t_i) + (1-\lambda) \tilde{M}_{n;x}(t_{i+1})$ with $\lambda = (t_{i + 1} - y)/(t_{i+1} - t_i)$, and $\tilde{m}_{n;x}(y) = 0 = \tilde{M}_{n;x}(y)$ for $y \leq y_1 = t_1$ and $\tilde{m}_{n;x}(y) = \tilde{M}_{n;x}(y_m) + (y - y_m) = \tilde{M}_{n;x}(t_k) + (y - t_k)$ for $y \geq y_m = t_k$. Then, since $M_x(\cdot)$ is Lipschitz continuous with Lipschitz constant 1,
	\begin{align*}
		& |\tilde{m}_{n;x}(y) - M_x(y)| \leq \\
		& \quad \max\left(|\tilde{M}_{n;x}(t_i) - M_x(t_i)| + |t_i - y|, |\tilde{M}_{n;x}(t_{i+1}) - M_x(t_{i+1})| + |t_{i+1} - y| \right)
	\end{align*}
	for all $y \in \mathbb{R}$. Provided that $\sup_{i = 1, \dots, k-1}|t_i - t_{i+1}| \leq c_n$, this implies $\sup_{y \in J_n}|\tilde{m}_{n;x}(y) - M_x(y)| \leq 2c_n$, so the same convergence rate applies if $\tilde{M}_x(y)$ and $\hat{M}_x(y)$ are evaluated on a sufficiently fine grid. If $Y_{n1} < \dots < Y_{nn}$ are independent and admit a density bounded away from zero on $J \supseteq J_n$, then the results of Section 4.3 in \citet{Moesching2020} imply that $\sup_{i = 1, \dots, n-1}|Y_{ni} - Y_{n(i+1)}| \leq c_n$ holds with asymptotic probability one for the $c_n$ from Theorem 3.2 in the manuscript, so it is admissible in this case to take the observed values $y_1, \dots, y_m$ as the grid.
\end{proof}

\section{Additional figures for Section 4} \label{app:sim}

Figure \ref{fig:sim_continuous_all_n} shows the same comparison as Figure 1 in the manuscript for $n = 1000$ and $n = 1500$. In Figures \ref{fig:sim_subagging_b} and \ref{fig:sim_subagging_p}, different variants of subagging are compared. Using more than $n/2$ of the total data in subsamples is generally not better than $n/2$ or less. A higher number of subsamples improves the subagging variants of the estimators, but the effect diminishes as the number of subsamples increases.

\begin{figure}
	\centering
	\caption{Relative improvement in $\mathrm{L}_1$ distance and mean absolute error of quantile estimates of the $\icv$- and $\icx$-order constrained estimator compared to the estimator under first order stochastic dominance, for $n = 1000$ and $n = 1500$. The solid lines show the improvement when the estimators are computed on the full sample, and the dashed lines for a subagging variant with $50$ subamples of size $n/2$. \label{fig:sim_continuous_all_n}}
	\bigskip
	\includegraphics[width = \textwidth]{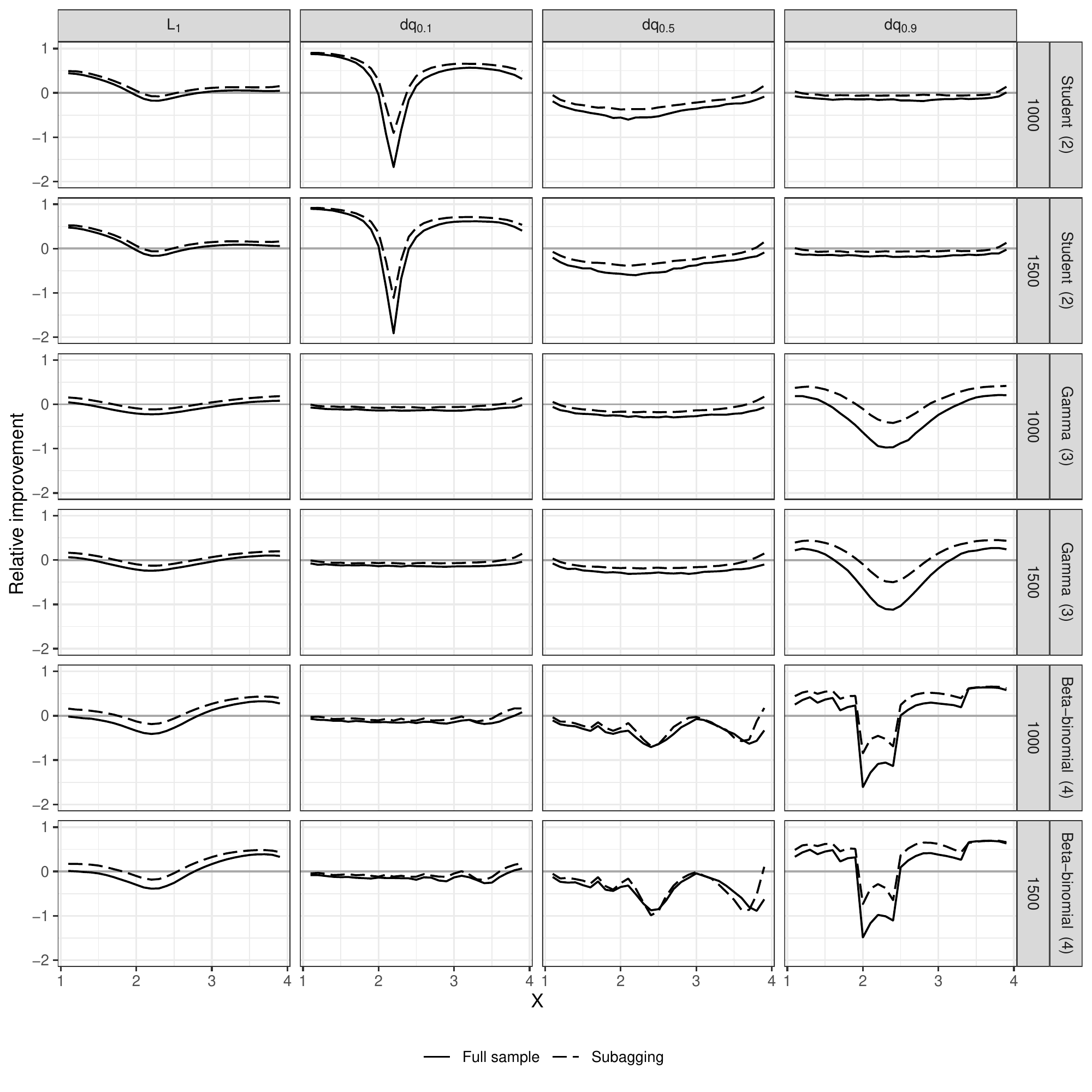}
\end{figure}

\begin{figure}
	\centering
	\caption{Relative improvement of subagging variants of the $\icv$- and $\icx$-constrained estimators (ICV/ICX) and of the estimator with first order stochastic dominance constraints (FSD) compared to the version without subagging. The sample size is $n = 1000$ fraction of data in each subsample is $n/2 = 500$.\label{fig:sim_subagging_b}}
	\bigskip
	\includegraphics[width = \textwidth]{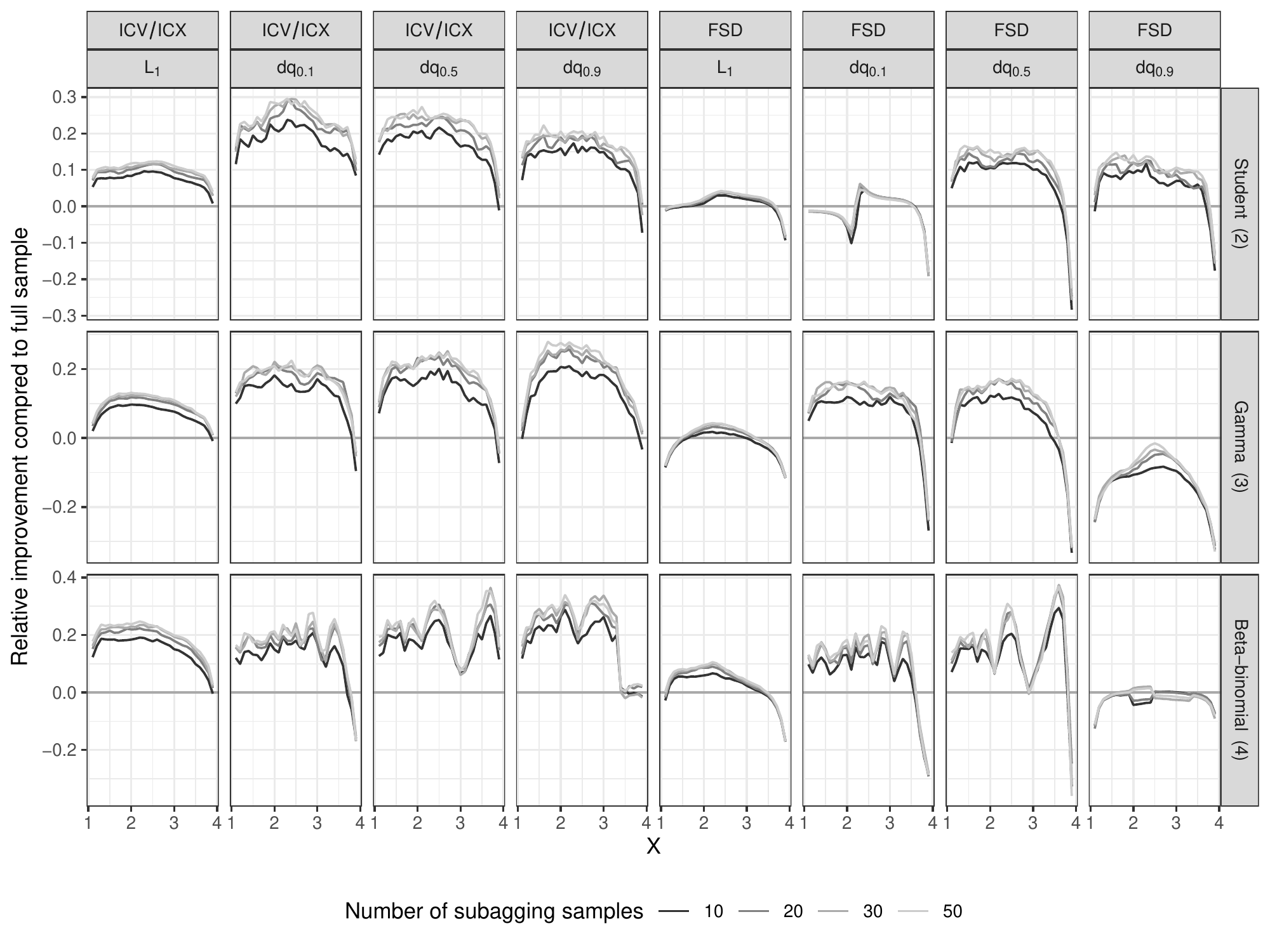}
\end{figure}

\begin{figure}
	\centering
	\caption{Relative improvement of subagging versions of $\icv$- and $\icx$-constrained estimators (ICV/ICX) and of the estimator with first order stochastic dominance constraints (FSD), compared to the variant with subsamples of size $n/2$. The sample size is $n = 1000$ and the number of subsamples is $50$ for the variants with subagging. \label{fig:sim_subagging_p}}
	\bigskip
	\includegraphics[width = \textwidth]{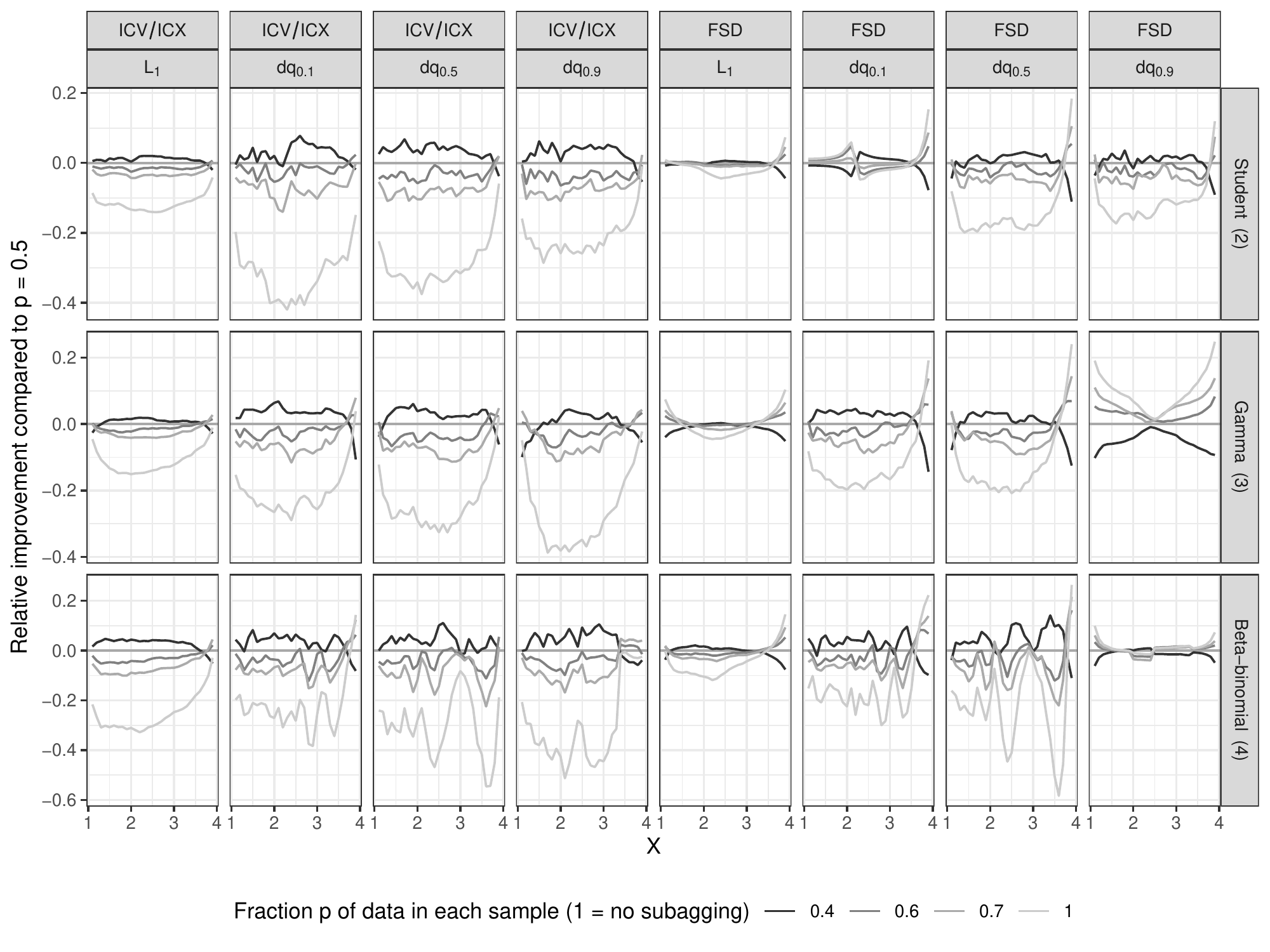}
\end{figure}

\end{document}